\theoremstyle{plain}
  \newtheorem{thm}{Theorem}
  \newtheorem{prop}{Proposition}
\theoremstyle{remark}
  \newtheorem{example}{Example}
  \newtheorem{rem}{Remark}
\newcommand{\g}{\mathfrak{g}}
\newcommand{\R}{\mathbb{R}}
\newcommand{\CC}{\mathbb{C}}
\newcommand{\C}{\mathcal{C}}
\newcommand{\no}{\noindent}
\newcommand{\be}{\begin{equation}}
\newcommand{\ee}{\end{equation}}
\renewcommand{\phi}{\varphi}
\newcommand{\ep}{\varepsilon}
\title{Inequalities from Poisson brackets}
\author{A.~Alekseev\thanks{Anton.Alekseev@unige.ch, Universit\'e de Gen\`eve, 2-4 rue du Li\`evre, c.p. 64, 1211 Gen\`eve~4 (Switzerland)}, I.~Davydenkova \thanks{Irina.Davydenkova@unige.ch, Universit\'e de Gen\`eve, 2-4 rue du Li\`evre, c.p. 64, 1211 Gen\`eve~4 (Switzerland)}}
\begin{document}

%\begin{large}

\maketitle

%\email{Anton.Alekseev@unige.ch}
%\address{Section de math\'ematiques, Universit\'e de Gen\`eve, 2-4 rue du Li\`evre, c.p. 64, 1211 Gen\`eve 4 (Switzerland)}

%\email{Irina.Davydenkova@unige.ch}
%\address{Section de math\'ematiques, Universit\'e de Gen\`eve, 2-4 rue du Li\`evre, c.p. 64, 1211 Gen\`eve 4 (Switzerland)}

\begin{abstract}
We introduce the notion of {\em tropicalization}  for Poisson structures on  $\mathbb{R}^n$ with coefficients in Laurent polynomials. To such a Poisson structure we  associate a polyhedral cone and  a constant Poisson bracket on this cone. There is a version of this formalism applicable to $\mathbb{C}^n$ viewed as a real Poisson manifold. In this case, the tropicalization gives rise to a completely  integrable system with action variables taking values in a polyhedral cone and angle variables spanning a torus.

As an example, we consider the canonical Poisson bracket on the dual Poisson-Lie group $G^*$ for $G=U(n)$ in the cluster coordinates of Fomin-Zelevinsky defined by  a certain choice of solid minors. We prove that the corresponding integrable system is isomorphic to the Gelfand-Zeitlin completely integrable system of Guillemin-Sternberg and Flaschka-Ratiu.
\end{abstract}

\section{Introduction}

Log-canonical coordinates on Poisson manifolds play an important role in Poisson Geometry. In particular, they have proved to be useful in the theory of cluster varieties (see {\em e.g.} \cite{GSW}). Log-canonical coordinates are characterized by the fact that for two coordinate functions, say $x$ and $y$, their Poisson bracket is of the form
$$
\{ x, y \} = c \, xy.
$$
If $x$ and $y$ take real positive values, one can define new coordinates  $\xi= \log(x)$ and
$\eta = \log(y)$ so as the Poisson bracket of $\xi$ and $\eta$ is constant,
$$
\{ \xi, \eta\} =c.
$$

In this paper, we consider Poisson brackets of more general type. 
For coordinate functions (that we denote again by $x$ and $y$) we now have
\be \label{eq:+laurent}
\{ x, y\} = c\, xy + p(x,y, \dots),
\ee
where $p(x,y, \dots)$ is a Laurent polynomial in $x,y$ and (possibly) other coordinate
functions. To a Poisson bracket of this type, we assign its {\em tropicalization} which is a 
pair $(\mathcal{C}, \{\cdot, \cdot\}_\infty)$ where $\C$ is a polyhedral cone
and $\{\cdot, \cdot\}_\infty$ is a constant Poisson bracket on $\C$.

Recall that the {\em tropical calculus} is a semi-ring structure on $\mathbb{R}$
where addition is replaced by the maximum function and multiplication is replaced by addition
$$
\xi \, +_{trop} \, \eta = \max(\xi,\eta), \hskip 0.3cm
\xi  \, \cdot_{trop} \, \eta= \xi+\eta.
$$
One can obtain this semi-ring structure as a $t \to + \infty$ limit of the standard 
semi-ring structure on $\mathbb{R}_+$ under the map $x \mapsto \xi =t^{-1} \log(x)$. Indeed,
$$
\lim_{t \to + \infty} \, t^{-1} \log \left( e^{t\xi} + e^{t\eta} \right) = \max(\xi, \eta), \hskip 0.3cm
\lim_{t \to + \infty} \, t^{-1} \log \left( e^{t\xi} \cdot e^{t\eta} \right) = \xi+ \eta.
$$
Returning to tropicalization of Poisson brackets, we consider an example
$$
\{ x, y\} = c\, xy + a\, x + b\, y.
$$
Let $t \in \mathbb{R}_+$ be
a real positive parameter, and let  $\xi= t^{-1} \log(x), \eta=t^{-1} \log(y)$. 
In coordinates $\xi,\eta$  the Poisson bracket acquires the form
$$
\{ \xi, \eta\}_t = t^{-2} \left( c + a e^{-t\eta} + be^{-t\xi}\right) .
$$
We require that the log-canonical contribution ($t^{-2} c$ on the right hand side) is dominant for $t \to + \infty$. This yields two inequalities
$$
\xi > 0, \,\, \eta>0
$$
which define the cone $\C$. By rescaling the bracket by a factor of $t^2$, we obtain an  expression which has a well-defined limit on $\C$ when $t$ tends to infinity,
$$
\{ \xi, \eta \}_\infty := \lim_{t \to + \infty} t^2 \{\xi, \eta\}_t = c.
$$
The resulting Poisson bracket $\{ \cdot, \cdot\}_\infty$ is constant.

There is a version of this formalism adapted to complex coordinate functions
$\{ z_1, \dots, z_n\}$ on a real Poisson manifold. In this case, we use the change
of variables $z_i = \exp(t \zeta_i + i \phi_i)$ with parameter $t \to + \infty$.
The result of the tropicalization procedure is again an open polyhedral cone $\C$
and a constant Poisson structure on $\C \times \mathbb{T}^n$. Under this constant
Poisson structure, coordinates $\zeta_i$ Poisson commute with each other. That is,
we obtain {\em a completely integrable system} with $\zeta_i$'s as action variables
and $\phi_i$'s as angle variables.

As an example, we consider the  Poisson bracket on the dual Poisson-Lie group
$G^*$ for $G=U(n)$. This Poisson bracket was defined in \cite{STS} and \cite{LW}.
As a coordinate system we use solid minors $\Delta^{(k)}_l$ from the total positivity
theory \cite{FZ}. Theorem of Kogan-Zelevinsky \cite{KZ} shows that these minors provide log-canonical coordinates on the Poisson-Lie group $G$. For the Poisson-Lie group
$G^*$, the corresponding Poisson bracket is no longer log-canonical, but it admits the form \eqref{eq:+laurent}.

The main result of this paper is the following theorem:
\begin{thm} \label{thm:intro}
The tropicalization of the Poisson bracket on the Poisson-Lie group $U(n)^*$
is isomorphic to the Gelfand-Zeitlin completely integrable system. 
%In particular,
%the polyhedral cone $\C$ is isomorphic to the Gelfand-Zeitlin cone defined by
%interlacing inequalities
%
%$$
%\lambda^{(k)}_l > \lambda^{(k-1)}_l > \lambda^{(k)}_{l+1}
%$$
%for $k=2, \dots, n$ and $l=1, \dots, n-1$.
\end{thm}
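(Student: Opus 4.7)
The plan is to carry out the complex tropicalization procedure explicitly in the minor coordinates $\Delta^{(k)}_l$ and then match the resulting data---cone $\C$, constant bracket $\{\cdot,\cdot\}_\infty$, action coordinates $\zeta^{(k)}_l$, and angle coordinates $\phi^{(k)}_l$---with the Gelfand-Zeitlin system. The first step is to derive a formula for the Poisson brackets $\{\Delta^{(k)}_l, \Delta^{(k')}_{l'}\}$ on $G^{*} = U(n)^{*}$. Starting from the Semenov-Tian-Shansky / Lu-Weinstein bracket and the Gauss factorization of elements of $G^{*}$, the $r$-matrix formalism reduces the calculation to brackets of matrix entries; the Desnanot-Jacobi and Pl\"ucker relations can then be applied repeatedly to put the answer in the form
\[
\{\Delta^{(k)}_l, \Delta^{(k')}_{l'}\} \;=\; c_{kl,k'l'}\, \Delta^{(k)}_l \Delta^{(k')}_{l'} \;+\; p_{kl,k'l'},
\]
with $p_{kl,k'l'}$ a Laurent polynomial in the remaining solid minors. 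The log-canonical coefficients $c_{kl,k'l'}$ should reproduce the Kogan-Zelevinsky formula on $G$, since the difference between the $G$- and $G^{*}$-brackets is of strictly lower weight in the log-canonical filtration.

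Next I would apply the tropicalization procedure of the introduction. Writing $\Delta^{(k)}_l = \exp(t\,\zeta^{(k)}_l + i\phi^{(k)}_l)$ and rescaling by $t^{2}$, each monomial appearing in $p_{kl,k'l'}$ yields a linear inequality on the $\zeta^{(k)}_l$ expressing that the log-canonical term dominates; the conjunction of these inequalities defines $\C$, and the limit bracket $\{\cdot,\cdot\}_\infty$ has structure constants $c_{kl,k'l'}$. The third step is to identify this data with the Gelfand-Zeitlin system: I would show that $\zeta^{(k)}_l = \lim_{t\to\infty} t^{-1}\log|\Delta^{(k)}_l(gg^{*})|$ equals a prescribed linear combination of the ordered eigenvalues of the principal Hermitian blocks of $gg^{*}$, so that the inequalities cutting out $\C$ become the classical interlacing inequalities of Gelfand-Zeitlin and $\C$ becomes the Gelfand-Zeitlin cone. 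The angles $\phi^{(k)}_l$ are to be matched with the Hamiltonian flow coordinates of Guillemin-Sternberg / Flaschka-Ratiu, and $\{\cdot,\cdot\}_\infty$ with the GZ Darboux form.

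The hardest step is the asymptotic identification of the tropical limits of the solid minors with the eigenvalue data of the principal submatrices: this is a tropical avatar of the Cauchy-Binet formula and of the Weyl min-max characterization of eigenvalues, in which only the dominant term survives as $t\to\infty$. Once this is in place, the interlacing inequalities emerge automatically from the dominance conditions in the second step, and matching the constant bracket with the GZ Darboux form reduces to verifying that the Kogan-Zelevinsky structure constants $c_{kl,k'l'}$ agree with those of the GZ torus action---a finite combinatorial check.
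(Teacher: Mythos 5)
Your overall strategy (compute the brackets of the solid minors on $G^*$, check they are log-canonical plus Laurent corrections, tropicalize, then identify the cone and the constant bracket with the Gelfand--Zeitlin data) matches the paper's architecture, but the step where all the difficulty lives is left as a black box. You propose to identify $\zeta^{(k)}_l=\lim_t t^{-1}\log|\Delta^{(k)}_l|$ with partial sums of log-eigenvalues of the principal Hermitian blocks of $gg^*$ via a ``tropical avatar of Cauchy--Binet and Weyl min-max,'' and you yourself flag this as the hardest step without supplying it. This is a genuine gap: for $l<k$ the solid minor $\Delta^{(k)}_l$ of $g$ has no exact relation to the eigenvalues of $(gg^*)^{(k)}$, only an asymptotic one, and establishing that asymptotic relation (and that the dominance inequalities cutting out $\C$ are \emph{exactly} the interlacing inequalities, with no extraneous or missing constraints) is precisely the content of the theorem. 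The paper avoids spectral theory entirely here: it expresses every minor of $g$ as a sum over multi-paths in the planar network $\Gamma_s$ (Lindstr\"om's Lemma), invokes the result of Alekseev--Podkopaeva--Szenes that the tropical Gelfand--Zeitlin map restricted to the principal linearity chamber $\C_0$ is a bijection onto $\C_{GZ}$, and proves the two inclusions $\sigma(\C_{GZ})\subset\C$ and $\C\subset\sigma(\C_{GZ})$ by exhibiting, for each rhombus inequality, a specific pair of $l$-paths in the expansion of $\{\Delta^{(k)}_l,\bar\Delta^{(k)}_l\}$ whose dominance condition is that inequality. Without either that combinatorial input or a worked-out spectral asymptotic, the cone identification does not go through.

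Two further points. First, your scaling is wrong for the complex case: you write that one rescales by $t^2$, but with coordinates $\Delta^{(k)}_l=\exp(t\zeta^{(k)}_l+i\phi^{(k)}_l)$ and the reality conditions \eqref{reality} the correct scaling is the \emph{linear} one $\pi_t=t\pi$; the $\{\zeta,\zeta\}$ brackets vanish identically (this is what produces the integrable system) and the surviving $\{\zeta,\phi\}$ brackets carry only one power of $t$, so a $t^2$ rescaling would diverge. Second, the final matching with the Gelfand--Zeitlin Darboux form is not merely a check of structure constants: the limiting bracket $\{\zeta^{(k)}_l,\phi^{(p)}_q\}_\infty$ is only lower-triangular with $-\tfrac12$ on the diagonal, and one still needs the grading/weight argument showing the log-canonical part of $\pi_{G^*}$ equals $\pi_{G^*_0}$, plus an explicit automorphism of the angle torus, to bring it to canonical form.
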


Our interest is motivated by the following observations: by the Ginzburg-Weinstein Isomorphism
Theorem \cite{GW}, the Poisson manifold $(G^*, \pi_{G^*})$ is isomorphic to $(\mathfrak{g}^*, \pi_{{\rm KKS}})$,
where $\pi_{{\rm KKS}}$ is the Kirillov-Kostant-Souriau Poisson bracket on $\mathfrak{g}^*$.
Since $\pi_{{\rm KKS}}$ is a linear Poisson structure, the scaling transformation $x \mapsto t^{-1} \, x,
\pi_t = t \, \pi_\text{KKS}$ is a Poisson isomorphism. This implies that $(G^*, \pi_{G^*})$ is 
Poisson isomorphic to $(G^*, t\pi_{G^*})$  for all $t >0$. 

The tropicalization procedure described in the paper assigns a limiting object at $t=+\infty$ to the family 
$(G^*, t\pi_{G^*}).$  Theorem \ref{thm:intro} shows that in the case of $G=U(n)$ this object is isomorphic to the Gelfand-Zeitlin completely integrable system.  Flaschka-Ratiu \cite{FR} discovered a Gelfand-Zeitlin type integrable system on $G^*$, and in \cite{AM} it was shown that the Flaschka-Ratiu system is isomorphic to the Gelfand-Zeitlin system. Hence, Theorem \ref{thm:intro} provides a $t=+\infty$ extension of the Ginzburg-Weinstein Isomorphism in the case of $G=U(n)$.

The structure of the paper is as follows: in Section \ref{logtrop} we introduce a notion of \textit{tropicalization} and associate a polyhedral cone and a constant Poisson bracket on this cone to a certain type of Poisson structures. First, we consider real positive Poisson manifolds, then we allow for complex coordinate functions and introduce a notion of \textit{linear scaling}. In Section~\ref{poisBG0} we consider Poisson structures on the group of upper triangular matrices and on its close relative $G_0^{*}.$ Finally, in Section~\ref{PoisG} we apply the machinery developed in Section~\ref{logtrop} to the Poisson structure on the dual Poisson-Lie group $U(n)^*$ to obtain the isomorphism with the Gelfand-Zeitlin completely integrable system.

\vskip 0.3cm

{\bf Acknowledgements.} We are grateful to M. Podkopaeva and A. Szenes
for useful discussions. We are indebted to the referees of this paper for their
valuable remarks and comments.

Our research was supported in part by the grant MODFLAT
of the European Research Council and by the NCCR SwissMAP of the Swiss National Science Foundation. Research of A.A. was supported in part  by the grants 200020--140985 and 200020--141329
of the Swiss National Science Foundation.
Research of I.D. was supported in part by the grant PDFMP2--141756 of the
Swiss National Science Foundation. 

\section{Log-canonical Poisson brackets \newline and Tropicalization}   \label{logtrop}

\subsection{Real positive Poisson manifolds}

%We will first introduce the concept of tropicalization for
%Poisson brackets on real positive Poisson manifolds.

Let $M$ be a real Poisson manifold and $U \subset M$ be a coordinate chart
with positive coordinate functions $\{ x_1, x_2, \dots, x_N\}, \,\, x_i \in \R_+$.

We say that the Poisson bivector
$\pi$ on $M$ is log-canonical with respect to the coordinate chart $U$ if it has the form
$$
\pi= \frac{1}{2} \, \sum_{i,j} \, \pi_{i,j} x_i x_j \, \frac{\partial}{\partial x_i} \wedge \frac{\partial}{\partial x_j}.
$$
That is, the Poisson brackets of coordinate functions are given by formula
$$
\{ x_i, x_j\} = \pi_{i,j} x_ix_j ,
$$
where no summation over repeating indices is assumed. 

The main object of our study will be Poisson brackets of the form
\begin{equation} \label{eq:pij}
\{ x_i, x_j\} = \pi_{i,j} x_ix_j + p_{i,j}(x),
\end{equation}
where $p_{i,j}(x)$ are Laurent polynomials in variables $x_1, \dots, x_N$.

The procedure of {\em tropicalization} will associate two combinatorial objects to a Poisson bracket of type \eqref{eq:pij}: an open polyhedral cone $\C(\pi; x)$ and a constant Poisson bracket on this cone. 

Let $V=\R^N$ with elements $(\xi_1, \dots, \xi_N) \in V$ and let $\{ e_1, \dots, e_N\}$ be the corresponding dual basis in $V^*$. For every pair $(i,j)$ with $1 \leq i < j \leq N$ consider the decomposition
$$
p_{i,j} = \sum_{I \in F_{i,j}} c_I x^I,
$$
where $I=(i_1, \dots, i_N)$ is a multi-index, $x^I=x_1^{i_1} \dots x_N^{i_N}$,
and $F_{i,j}$ is the set of multi-indices for which the coefficients $c_I$ are non-vanishing.
Put $n_{i,j}=e_i + e_j \in V^*$, denote
$$
n(I) = \sum_{r=1}^N i_r e_r
$$
%
%and let $\C^*_{i,j}\subset V^*$  be the  minimal cone which 
%contains all vectors $n_{i,j}-n$ for all
%$n \in F_{i,j}$. Since $F_{i,j}$ is a finite set $\C^*_{i,j}$ is a polyhedral cone. 
and let $\C_{i,j} \subset V$ be the convex cone defined as follows
$$
\C_{i,j} = \{ \xi \in V; \,\, \langle n_{i,j} - n(I) , \xi \rangle > 0 \,\, \forall \,\, I \in F_{i,j}\}.
$$

\no In more detail, the cone $\C_{i,j}$ is defined by the inequalities
$$
\xi_i + \xi_j > \sum_{r=1}^N i_r \xi_r
$$
for all $I \in F_{i,j}$.
We define the cone $\C(\pi; x) \subset V$ as the intersection of 
the cones $\C_{i,j}$ for all pairs $(i,j)$:
$$
\C(\pi; x) = \cap_{i<j} \,\, \C_{i,j}.
$$

\begin{example}
Let $\pi$ be a log-canonical Poisson bracket in coordinates $x_1, \dots, x_N$. Then
the set $F_{i,j}$ is empty for all $i,j$ and $\C_{i,j} = V$ which yields $\C(\pi; x) = V$. 
\end{example}

\begin{example}
Let $N=2$ and let $\pi$ be the Poisson bracket defined by formula
$$
\{ x_1, x_2\} = x_1(1+x_2^2).
$$
In this case, we obtain two inequalities,
$$
\xi_1 + \xi_2 > \xi_1, \hskip 0.3cm 
\xi_1 + \xi_2 > \xi_1 + 2 \xi_2.
$$
They contradict each other, and in this case the cone $\C(\pi, x)$ is empty.
\end{example}

The next step is to introduce the following {\em scaling transformation}: let $t \in \mathbb{R}_+$ be a parameter, make a change of variables $x_i = \exp(t\xi_i),
\xi_i= \frac{1}{t} \ln(x_i)$ and scale the Poisson bivector as follows, $\pi_t = t^2 \pi$. If the Poisson bracket is log-canonical, it will become constant in variables $\xi_1, \dots, \xi_N$
$$
\{ \xi_i, \xi_j \}_t = \frac{1}{t^2} \, \{ \ln(x_i), \ln(x_j)\}_t = \pi_{i,j}.
$$
Note that the right hand side does not depend on $t$. This observation motivates the following definition:
let $\pi$ be a Poisson bracket of the form \eqref{eq:pij}. Then, a constant Poisson
bracket on the cone $\C(\pi; x)$ denoted by $\pi_\infty$ and given by the following formula can be associated to it
$$
\pi_\infty = \frac{1}{2} \, \sum_{i,j} \, \pi_{i,j} \, \frac{\partial}{\partial \xi_i} \wedge \frac{\partial}{\partial \xi_j}
$$
thus $\{ \xi_i, \xi_j\}_\infty=\pi_{i,j}$.

\begin{example}
\label{ex:abc}
Let $N=2$ and 
$$
\{ x_1, x_2\} = x_1x_2 + x_1^2+x_2
$$
\no which implies $F_{1,2}=\{(2,0);(0,1)\}$. The set $F_{1,2}$, the vector $n_{1,2}$ and
the cone $ \C$ are represented on the Figure \ref{ex2d}.

\begin{figure}[!htbp]
\label{ex2d}
\begin{center}
\includegraphics[width=8cm]{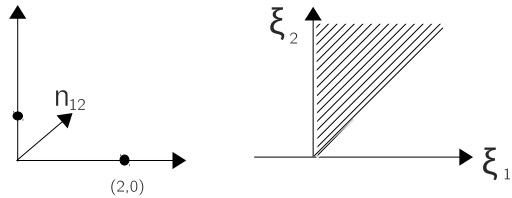}
\caption{The set $F_{1,2}$ and the cone $\C$  }
\end{center}
\end{figure}

\no It is easy to see that the Poisson bracket $\pi_\infty$ is of the form $ \{ \xi_1, \xi_2 \}_{\infty} = 1$.

\end{example}

\begin{prop}
Let $\pi$ be a Poisson bracket of type \eqref{eq:pij}. Then, in coordinates $\xi$ we have 
$$
t^2 \pi \to_{t \to + \infty} \pi_\infty.
$$
\no  for $\xi$ in $ \C(\pi; x).$
\end{prop}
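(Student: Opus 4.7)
The plan is a direct calculation using the chain rule for Poisson brackets combined with the explicit defining inequalities of the cone $\C(\pi; x)$. First I would apply the change of variables $x_i = \exp(t\xi_i)$, so that $\xi_i = t^{-1} \ln(x_i)$, and evaluate the bracket $\{\xi_i,\xi_j\}$ with respect to the original Poisson structure $\pi$. Using $d\xi_i = (tx_i)^{-1}\,dx_i$ and the derivation property of the Poisson bracket, this yields
\[
\{\xi_i,\xi_j\}_\pi \;=\; \frac{1}{t^2\,x_i x_j}\,\{x_i,x_j\}_\pi \;=\; \frac{1}{t^2}\left(\pi_{i,j} + \frac{p_{i,j}(x)}{x_i x_j}\right).
\]

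Next I would expand the Laurent-polynomial contribution using the decomposition $p_{i,j} = \sum_{I\in F_{i,j}} c_I\, x^I$. Substituting $x^I = \exp(t\langle n(I),\xi\rangle)$ and recalling that $n_{i,j}=e_i+e_j$ encodes the monomial $x_i x_j$, one obtains
\[
\frac{p_{i,j}(x)}{x_i x_j} \;=\; \sum_{I\in F_{i,j}} c_I \exp\!\bigl(t\langle n(I)-n_{i,j},\xi\rangle\bigr).
\]
Multiplying through by $t^2$ therefore gives
\[
t^2\,\{\xi_i,\xi_j\}_\pi \;=\; \pi_{i,j} \;+\; \sum_{I\in F_{i,j}} c_I \exp\!\bigl(-t\langle n_{i,j}-n(I),\xi\rangle\bigr).
\]

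The key observation is now built into the definition of $\C(\pi;x)$: for every $\xi\in\C(\pi;x)$ and every $I\in F_{i,j}$, we have $\langle n_{i,j}-n(I),\xi\rangle>0$. Hence each exponent in the sum above is strictly negative and the corresponding term decays exponentially as $t\to+\infty$. It follows that $t^2\{\xi_i,\xi_j\}_\pi \to \pi_{i,j} = \{\xi_i,\xi_j\}_\infty$ pointwise on $\C(\pi;x)$, which is the claim. Because $p_{i,j}$ is a Laurent polynomial, the index set $F_{i,j}$ is finite, so interchanging the finite sum with the limit poses no issue; this is the only conceivable technical obstacle and it is dispatched immediately by the polynomial hypothesis on $p_{i,j}$.
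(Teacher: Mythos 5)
Your proposal is correct and follows essentially the same route as the paper: change variables, use the derivation property to reduce $t^2\{\xi_i,\xi_j\}$ to $\pi_{i,j}$ plus a finite sum of exponentials $c_I\exp\bigl(-t\langle n_{i,j}-n(I),\xi\rangle\bigr)$, and invoke the defining inequalities of $\C(\pi;x)$ to kill each term as $t\to+\infty$. The only difference is cosmetic notation (the paper writes the exponents as $t\sum i_k\xi_k$ rather than via the pairing with $n(I)$), and your remark about the finiteness of $F_{i,j}$ is a harmless extra precaution.
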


\begin{proof}
Let $1 \leq i < j \leq N$, and compute
$$
\{ x_i, x_j \}_t = \{ e^{t\xi_i}, e^{t\xi_j} \}_t = t^2 e^{t(\xi_i + \xi_j)} \, \{\xi_i, \xi_j\}_t .
$$
That is, for the bracket $\{\xi_i, \xi_j\}_t$ we obtain the following expression

\begin{multline*}
\{\xi_i, \xi_j\}_t = t^{-2} e^{- t(\xi_i + \xi_j)} \{ x_i, x_j \}_t =\\
=e^{- t(\xi_i + \xi_j)}\left( \pi_{i,j} e^{t(\xi_i + \xi_j)}
+ \sum_{I \in F_{i,j}} c_I  e^{t \sum i_k \xi_k}\right) .
\end{multline*}
For $\xi \in \C(\pi; x)$ we have $\xi_i + \xi_j > \sum i_k \xi_k$ for all $I \in F_{i,j}$.
Hence, the right hand side tends to $\pi_{i,j}$ when $t \to +\infty$.
\end{proof}

\subsection{Complex coordinates and linear scaling}
In this Section, we shall allow for complex valued coordinate functions.
The coordinate chart $U$ will carry coordinates of the form
$\{ x_1, \dots, x_k, z_1, \dots, z_l\}$, where $x_1, \dots, x_k$ are real positive and $z_1, \dots, z_l$ are complex valued non-vanishing functions. Then, the real dimension of $M$ is $2l+k$, and we also get complex conjugates of the coordinate functions $\bar{z}_1, \dots, \bar{z}_l$ on $U$. 

A Poisson bracket $\pi$ is log-canonical in the coordinate chart $U$ is it is of the form
%Poisson bracket on $M$ is log-canonical in the coordinate chart $U$ if it is quadratic in 
%the coordinates $x_1, \dots, x_k, z_1, \dots, z_l, \bar{z}_1, \dots, \bar{z}_l$. In more detail,
%the Poisson brackets of coordinate functions are of the form
%
$$
\begin{array}{lll}
\{ x_i, x_j\} = \pi_{i,j} x_i x_j , &
\{ x_i, z_a\} = \pi_{i,a} x_i z_a , &
\{ x_i, \bar{z}_a\} =   \pi_{i,\bar{a}} x_i \bar{z}_a,   \\
\{ z_a, z_b \} = \pi_{a,b} z_a z_b, &
\{ \bar{z}_a, \bar{z}_b \} = \pi_{\bar{a}, \bar{b}} \bar{z}_a \bar{z}_b, &
\{z_a, \bar{z}_b\} = \pi_{a, \bar{b}} z_a \bar{z}_b.
\end{array}
$$
Since the bivector $\pi$ is supposed to be real, we have the following reality conditions imposed
on the components of $\pi$:
$$
\pi_{i, \bar{a}} = \overline{\pi_{i,a}} \hskip 0.2cm , \hskip 0.2cm
\pi_{\bar{a}, \bar{b}} = \overline{\pi_{a,b}} \hskip 0.2cm , \hskip 0.2cm
\pi_{a, \bar{b}} = - \overline{\pi_{b, \bar{a}}} \hskip 0.2cm .
$$

\begin{rem}
A more conceptual way to introduce log-canonical Poisson structures is as follows: let $G=\mathbb{R}_+^k \times (\mathbb{C}^*)^l$ be an abelian real Lie group with point-wise multiplication. Then, log-canonical Poisson structures are exactly the translation-invariant Poisson structures on $G$ (since $G$ is abelian, left and right translations coincide).\footnote{We are grateful to the referee for this remark.}

\end{rem}

\iffalse
\begin{example}
Let $k=1, l=1$ and consider the Poisson bracket of the form
%
$$
\{ x, z \} = ixz, \indent
\{ x, \bar{z}\} = -i x \bar{z}, \indent
\{ z, \bar{z}\} =0.
$$
It is log-canonical and verifies the reality conditions.
\end{example}
\fi

More generally, we shall consider Poisson brackets of the form

\be \pi = \pi_0 + \pi', \label{brcom} \ee 

\no where $\pi_0$ is a log-canonical Poisson bracket and
$\pi'$ is a bivector with coefficients in Laurent polynomials in variables
$x, z$ and $\bar{z}$. Let $V = \R^{k+l}$ with elements
$(\xi_1, \dots, \xi_k, \zeta_1, \dots, \zeta_l)$. Denote the dual basis in
$V^*$ by $e_i, i=1, \dots, k$ and $f_a, a=1, \dots, l$.
Similarly to the previous Section, we define the cones $\C_{i,j}$ for $i<j$,
$\C_{a,b}$ for $a<b$ and $\C_{i, a}$. For example, we have
$$
\{ x_i, z_a\} = \pi_{i,a} x_i z_a + p_{i,a}(x,z),
$$
where $p_{i,a}(x,z)$ is a Laurent polynomial in variables $x,z$ and $\bar{z}$.
It can be written in the form
$$
p_{i,a}(x,z) = \sum_{I,J,K \in F_{i,a}} \,\, c_{I,J,K} x^I z^J \bar{z}^K,
$$
where $I,J$ and $K$ are multi-indices, and $F_{i,a}$ is the finite set
where the coefficients $c_{I,J,K}$ are non-vanishing. Denote
$n_{i,a}=e_i + f_a \in V^*$ and 
$$
n(I, J, K)= \sum_{r=1}^k i_r e_r + \sum_{s=1}^l (j_s + k_s) f_s
$$
for $(I,J,K) \in G_{i,a}$. The cone $\C_{i,a}$ is defined as follows
$$
\C_{i,a}= \{ \eta=(\xi, \zeta) \in V; \,\, \langle n_{i,a} - n(I,J,K), \eta\rangle >0 \,\, \forall I,J,K \in F_{i,a}\},
$$
\no That is we have the inequalities 

$$
\xi_i + \zeta_a > \sum_{r=1}^k i_r \xi_r+\sum_{s=1}^l (j_s +k_s)\zeta_s.
$$

\no We define the cone $\C(\pi; x, z)$ as the intersection of the cones
$\C_{i,j}, \C_{i,a}$ and $\C_{a,b}$.

We shall assume in addition the following reality conditions on the 
log-canonical part of the bivector $\pi$:
\be \label{reality}
\begin{array}{llll}
 \pi_{i,j}=0, & {\rm Re} \, \pi_{i,a}=0,&
 {\rm Re} \, \pi_{a,b} = 0, & {\rm Re} \, \pi_{a, \bar{b}}=0.
\end{array}
\ee
Under these assumptions, a log-canonical bivector admits the following linear
scaling. Again, let $t \in \mathbb{R}_+$ be a parameter. We introduce new coordinates
on $U$ via $x_i=\exp(t\xi_i), z_a=\exp(t\zeta_a + i \phi_a)$. Consider the scaled
Poisson bracket $\pi_t = t \pi$ in new coordinates. It yields the following Poisson brackets:
\be \label{brs}
\begin{array}{ll}
\{ \xi_i, \xi_j\}_t = 0, &
\{ \xi_i, \zeta_a\}_t = 0  , \\
\{ \xi_i, \phi_a\}_t =   {\rm Im} \, \pi_{i,a} ,  &
\{ \zeta_a, \zeta_b \}_t =  0 , \\
\{ \zeta_a, \phi_b \}_t =   \frac{1}{2} \, {\rm Im} \, (\pi_{a,b} - \pi_{a, \bar{b}}), &
\{\phi_a, \phi_b\}_t = 0 .
\end{array}
\ee

As before, this bracket does not depend on $t$, and we can denote it by $\pi_\infty$.
It is defined on the product $\C(\pi; x, z) \times \mathbb{T}^l$, where
$(\xi, \zeta) \in \C(\pi; x,z)$ and $(\phi_1, \dots, \phi_l) \in \mathbb{T}^l$,
the real torus of dimension $l$.

\begin{rem}
Log-canonical Poisson brackets without reality conditions \eqref{reality} do not
allow for a linear scaling limit. Instead, one can consider the limit of $\pi$ (no
powers of $t$ added) in coordinates $(\xi, \zeta, \phi)$. It yields constant
Poisson brackets between the angle variables $\{ \phi_a, \phi_b\}$ while
$\xi$'s and $\zeta$'s become Casimir functions in the limit.
\end{rem}

\begin{rem}
Log-canonical Poisson brackets with reality condition \eqref{reality} admit the following
geometric interpretation. Consider the manifold $G=\mathbb{R}_+^k \times (\mathbb{C}^*)^l$ 
as a graded manifold with the base $\mathbb{T}^l=(S^1)^l$ the real torus of dimension $l$
parametrized by the angles $\phi_a= {\rm Arg}(z_a), a=1, \dots, l$. These angle coordinates 
have degree zero. Declare the coordinates $\xi_i = \log(x_i)$ and $\zeta_a = \log(|z_a|)$ to 
be of degree 1. Then, conditions \eqref{reality} are equivalent to saying that the Poisson
structure is of degree one.
\end{rem}

\begin{rem}
Note that  log-canonical Poisson brackets with reality conditions
\eqref{reality} naturally give rise to completely integrable systems.
Indeed, variables $\xi_i$ and $\zeta_a$ Poisson commute.
Assuming that the rank of the bracket $\pi$ is equal to $2l$ (which is 
the maximal possible rank), this is a maximal family of Poisson 
commuting functions. The dual angles are $\phi_a$'s. They are
spanning the Liouville tori. The variables $(\xi, \zeta, \phi)$ are in fact 
action-angle variables for the resulting completely integrable system.
\end{rem}

\begin{example}
Let $k=1, l=1$ and consider the Poisson bracket of the form
$$
\{ x, z \} = ixz, \indent
\{ x, \bar{z}\} = -i x \bar{z}, \indent
\{ z, \bar{z}\} =i(x^2-x^{-2}).
$$
The set $G_{1,\bar{1}}$ and the cone $\C$ are represented at the Figure \ref{exXZ}:
\begin{figure}[!htbp]
\label{exXZ}
\begin{center}
\includegraphics[width=10cm]{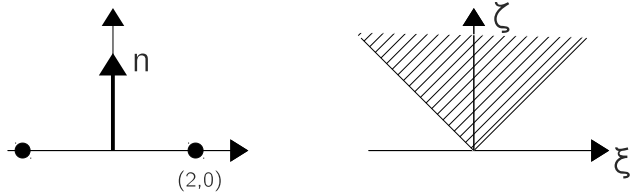}
\caption{The set $F_{1,\bar{1}}$ and the cone $\C$  }
\end{center}
\end{figure}

\no After changing variables $x=e^{t\xi},\, z=e^{t\zeta +i\phi}, \bar{z}=e^{t\zeta-i\phi} $ 
and applying the $t \to +\infty$ limit we obtain the 
following constant Poisson bracket on $\C\times\mathbb{S}^1$
$$
\{ \xi, \zeta \}_\infty =0 , \indent
\{ \xi, \phi \}_\infty =1, \indent
\{ \zeta, \phi \}_\infty =0.
$$
\end{example}

\begin{prop}
Let $\pi$ be a Poisson bracket of type (\ref{brcom}) verifying reality conditions \eqref{reality}. Then,  in coordinates $(\xi,\zeta,\phi)$ 
we have
$$
t \pi \to_{t \to + \infty} \pi_\infty.
$$
for $(\xi,\zeta,\phi) \in \C(\pi; x, z)\times \mathbb{T}^l$
\end{prop}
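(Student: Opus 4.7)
The plan is to imitate the proof of the previous proposition, but now systematically treating all six types of brackets between the new real coordinates $(\xi_i,\zeta_a,\phi_a)$, and to use the reality conditions \eqref{reality} in an essential way to cancel the would-be divergent contributions from the log-canonical part.

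First I would invert the change of variables as
\[
\xi_i = \tfrac{1}{t}\log x_i,\qquad \zeta_a = \tfrac{1}{2t}\bigl(\log z_a+\log\bar z_a\bigr),\qquad \phi_a = \tfrac{1}{2i}\bigl(\log z_a-\log\bar z_a\bigr),
\]
so that any bracket of two new coordinates is a $\CC$-linear combination of brackets of the form $\{\log u,\log v\}_t$, where $u,v\in\{x_i,z_a,\bar z_a\}$. For such a pair one has $\{\log u,\log v\}_t = (uv)^{-1}\{u,v\}_t$, which by \eqref{brcom} splits as a log-canonical piece $t\,\pi_{u,v}$ plus a correction $t\,(uv)^{-1}p_{u,v}(x,z,\bar z)$. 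The proposition then reduces to showing that (i) the log-canonical pieces, after being assembled with the prefactors $1/t^2$, $1/(2it)$, $-1/4$, etc.\ coming from the above inversion, reproduce exactly the right-hand sides of \eqref{brs}, and (ii) all the correction terms tend to zero as $t\to+\infty$ on $\C(\pi;x,z)\times\T^l$.

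For step (i) the log-canonical contribution to e.g.\ $\{\xi_i,\zeta_a\}_t$ is $\tfrac{1}{2t}(\pi_{i,a}+\pi_{i,\bar a})=\tfrac{1}{t}\,\mathrm{Re}\,\pi_{i,a}$, which vanishes by \eqref{reality}; the contribution to $\{\xi_i,\phi_a\}_t$ is $\tfrac{1}{2i}(\pi_{i,a}-\pi_{i,\bar a})=\mathrm{Im}\,\pi_{i,a}$; and analogous computations for the remaining four bracket types recover the rest of \eqref{brs}. Here the $t$-prefactor from $\pi_t=t\pi$ is precisely what is needed to make the surviving expressions $t$-independent; the diverging $1/t$ terms are exactly those killed by \eqref{reality}, which is why the reality assumptions are indispensable at this stage.

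For step (ii), every Laurent monomial $c_{I,J,K}x^Iz^J\bar z^K$ appearing in some $p_{u,v}$ becomes, after the change of variables,
\[
c_{I,J,K}\exp\!\bigl(t\,\langle n(I,J,K),\eta\rangle + i\,\langle J-K,\phi\rangle\bigr),
\]
where $\eta=(\xi,\zeta)$. Divided by $uv=\exp(t\langle n_{u,v},\eta\rangle+i\,\cdot)$, this becomes $O(\exp(t\,\langle n(I,J,K)-n_{u,v},\eta\rangle))$, and on the cone $\C(\pi;x,z)$ the pairing $\langle n_{u,v}-n(I,J,K),\eta\rangle$ is by definition strictly positive. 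Thus each such correction is of order $t\cdot e^{-ct}$ for some $c>0$ that is uniform on compacts of $\C(\pi;x,z)$, and once divided by the appropriate power of $t$ coming from the inversion it tends to zero. Summing the finitely many monomials and taking the limit yields the constant bracket $\pi_\infty$.

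I expect the only real subtlety to be bookkeeping: one must verify that for every one of the six bracket types the prefactors conspire correctly so that (a) the $1/t$-divergent pieces are exactly those forbidden by \eqref{reality}, and (b) the surviving $O(1)$ pieces are the ones listed in \eqref{brs}. Once this is set up, convergence on $\C(\pi;x,z)\times\T^l$ is automatic from the cone inequalities as above.
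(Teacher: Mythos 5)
Your proposal is correct and follows essentially the same route as the paper: isolate the brackets of the new coordinates from those of $x,z,\bar z$ via the (logarithmic) change of variables, check that the reality conditions \eqref{reality} reduce the log-canonical part to the $t$-independent expressions \eqref{brs}, and kill the Laurent corrections using the defining inequalities of the cone $\C(\pi;x,z)$ (the paper carries this out only for $\{\xi_i,\phi_a\}_t$, extracting it as $t^{-1}e^{-t(\xi_i+\zeta_a)}\,\mathrm{Im}\bigl[e^{-i\phi_a}\{x_i,z_a\}_t\bigr]$, and declares the other five cases similar). One minor bookkeeping remark: the only genuinely divergent log-canonical contribution is the $O(t)$ term in $\{\phi_a,\phi_b\}_t$, proportional to $\mathrm{Re}\,\pi_{a,b}-\mathrm{Re}\,\pi_{a,\bar b}$, which is what \eqref{reality} must cancel; the conditions $\pi_{i,j}=0$ and $\mathrm{Re}\,\pi_{i,a}=0$ serve only to make $\{\xi_i,\xi_j\}_t$ and $\{\xi_i,\zeta_a\}_t$ exactly zero rather than $O(1/t)$, so your phrase ``$1/t$-divergent'' should be read in this corrected sense.
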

\begin{proof}
 
\no We shall give a proof for the Poisson bracket
 $\{ \xi_i, \phi_a\}_\infty$, the calculation for other Poisson brackets between coordinates
 is similar and will be omitted. Consider
\begin{multline*}
\{ x_i, z_a\}_t = \{e^{t\xi_i},e^{t\zeta_a+i\phi_a}\}_t=\\
=t^2e^{t(\xi_i+\zeta_a)+i\phi_a}\{\xi_i,\zeta_a\}_t+ite^{t(\xi_i+\zeta_a)+i\phi_a}\{\xi_i,\phi_a\}_t .
\end{multline*}

\no Thus, for the bracket $\{ \xi_i, \phi_a\}_t$ we obtain the following expression

\begin{multline*}
\{ \xi_i, \phi_a\}_t = t^{-1}e^{-t(\xi_i+\zeta_a)}{\rm Im}\left[ e^{-i\phi_a} \{x_i,z_a\}_t \right]=\\
=t^{-1}e^{-t(\xi_i+\zeta_a)}{\rm Im}\left[te^{-i\phi_a}(\pi_{i,a}e^{t\xi_i}e^{t\zeta_a+i\phi_a}) +te^{-i\phi_a}p_{i,a}(x,z) \right]=\\
={\rm Im}\,\left[ \pi_{i,a}+e^{-t(\xi_i+\zeta_a)}e^{-i\phi_a}\sum_{I,J,K \in G_{i,a}} \,\, c_{I,J,K} x^I z^J \bar{z}^K \right] .
\end{multline*}

\no 
Let $I,J,K \in G_{i,a}$ and consider the expression 
$$
x^I z^J \bar{z}^K = \exp\left( \sum_{r=1}^k i_r t\xi_r + \sum_{s=1}^l j_s (t\zeta_s + i \phi_s) + 
\sum_{t=1}^l k_t (t\zeta_t - i \phi_t)\right) .
$$

\no For $(\xi,\zeta) \in \C(\pi; x,z)$, we have 
$$
\xi_i + \zeta_a > \sum_{r=1}^k i_r \xi_r + \sum_{s=1}^l (j_s + k_s) \zeta_s
$$
for all  $I,J,K \in G_{i,a}$. Hence, the exponential $e^{t(\xi_i+\zeta_a)}$ dominates all the 
expressions $c_{I,J,K} x^I z^J \bar{z}^K$ and $e^{-t(\xi_i+\zeta_a)} c_{I,J,K} x^I z^J \bar{z}^K$
tends to zero when $t \to + \infty$, as required.
\end{proof}

\section{Poisson brackets on Poisson-Lie groups $B_+$ and $G^*_0$} \label{poisBG0}

In this Section we recall the definitions of Poisson brackets and of log-canonical coordinates
on the group of upper triangular invertible matrices and on its close relative the group $G^*_0$.

\subsection{Poisson-Lie group of upper triangular matrices}
Let $\g={\rm gl}(n, \mathbb{C})$, and let $r \in \g \otimes \g$ be the standard classical
$r$-matrix given by formula
$$
r = \frac{1}{2} \, \sum_i \, e_{i,i} \otimes e_{i,i} + \sum_{i<j} e_{i,j} \otimes e_{j,i},
$$
where $e_{i,j}$ is the elementary matrix with the only non-vanishing matrix entry 
equal to $1$ at the intersection of the $i$'th row and $j$'th column. Sometimes it is
convenient to split the $r$-matrix into two parts,
$$
r_0 = \frac{1}{2} \, \sum_i \, e_{i,i} \otimes e_{i,i} 
\hskip 0.3cm , \hskip 0.3cm
r' = \sum_{i<j} e_{i,j} \otimes e_{j,i} .
$$

The group $B_+$
of invertible upper-triangular matrices carries a Poisson structure given by formula
\be   \label{rgg-ggr}
\{ g^1 , g^2\} = [r, g^1g^2 ] = r g^1g^2 - g^1g^2 r,
\ee
where we are using the Saint--Petersburg notation $g^1 = g \otimes 1, g^2= 1 \otimes g$.

\begin{rem}
To illustrate the usage of this notation convention, consider a simpler bracket
$\{g^1, g^2\} = r_0 g^1 g^2$. In terms of more standard notation, this bracket looks as
$$
\{ f, h\} = \langle \nabla^L f \otimes \nabla^L h, r_0\rangle,
$$
where $f$ and $h$ are two functions on $B_+$, $\nabla^L$ is defined as
$$
(\nabla^L f)_g(x) = \frac{d}{dt} \, f\big( e^{tx} g) |_{t=0}
$$
for $x \in \mathfrak{b}_+  = {\rm Lie}(B_+)$, and  the pairing $\langle \cdot, \cdot \rangle$ is
induced by the natural pairing between $\mathfrak{b}_+$ and $\mathfrak{b}_+^*$.

The writing $\{g^1, g^2\} = r_0 g^1 g^2$ encodes the following (non skew-symmetric)
brackets of the  matrix elements of $g$:
$$
\{ g_{ij}, g_{st} \} = \delta_{is} g_{ij} g_{st}.
$$
This formula is obtained by taking the matrix element $(i,j)$ in the first factor of the tensor 
product (the matrix $g^1$), and the matrix element $(s,t)$ in the second factor
(the matrix $g^2$). Note that the brackets of matrix elements completely determine the Poisson
bracket on $B_+$.
\end{rem}

The Jacobi identity for the bracket \eqref{rgg-ggr} is a corollary of the classical Yang-Baxter equation
for the element $r$:
\begin{equation}
[r^{1,2}, r^{2,3}] + [r^{1,2}, r^{1,3}] + [r^{1,3}, r^{2,3}] =0.
\end{equation}
The group multiplication $B_+ \times B_+ \to B_+$ is a Poisson map making $B_+$
into a Poisson-Lie group.

Following Kogan-Zelevinsky \cite{KZ}, we introduce a log-canonical coordinate 
chart on $B_+$ in the following way. Let $ n \geq k \geq l \geq 1$ and denote by
$\Delta^{(k)}_l$ the solid minor of the matrix $g$ of size $l$ formed by the intersection of 
rows with consecutive numbers $n-k+1, \dots, n-k+l$ and the last $l$ columns (see Figure \ref{uptrmin}).
These $n(n+1)/2$ minors define coordinates on an open dense subset in $B_+$.
Hence, a smooth Poisson bracket on $B_+$ is completely characterized by the
brackets between $\Delta^{(k)}_l$'s.

\begin{figure}[!htbp]
\label{uptrmin}
\begin{center}
\includegraphics[width=7cm]{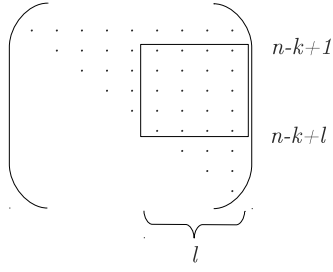}
\caption{minor $\Delta^{(k)}_l$ }
\end{center}
\end{figure}

\begin{thm}   \label{thm:DeltaPoisson}
The Poisson bracket of functions $\Delta^{(k)}_l, n\geq k \geq l \geq 1$ is log-canonical, and it has the
form
\begin{equation}  \label{eq:DeltaPoisson}
\{ \Delta^{(k)}_l, \Delta^{(p)}_q \} = \frac{1}{2} \epsilon(k-p) ( C - R ) \Delta^{(k)}_l \Delta^{(p)}_q,
\end{equation}
where $R$ is the number of common rows, $C$ is the  number of common columns of the 
two minors, and $\epsilon(x)$ is the sign function (that is, $\epsilon(x)=1$ for $x>0$,
$\epsilon(x)=-1$ for $x<0$, and $\epsilon(0)=0$).
\end{thm}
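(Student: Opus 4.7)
I would represent each solid minor as a matrix coefficient on an exterior power and use the fact that \eqref{rgg-ggr} is a universal $r$-matrix identity. With $V=\mathbb{C}^n$ and $\rho_l$ the natural action of $\mathrm{GL}(n)$ on $\wedge^l V$, write
\[
\Delta^{(k)}_l(g)=\bigl\langle v^*_{I_k},\,\rho_l(g)\,v_{J_l}\bigr\rangle,\qquad I_k=\{n{-}k{+}1,\dots,n{-}k{+}l\},\quad J_l=\{n{-}l{+}1,\dots,n\}.
\]
The universal relation then gives
\[
\{\Delta^{(k)}_l,\Delta^{(p)}_q\}(g)=\bigl\langle v^*_{I_k}\otimes v^*_{I_p},\,\bigl[(\rho_l\otimes\rho_q)(r),\;\rho_l(g)\otimes\rho_q(g)\bigr](v_{J_l}\otimes v_{J_q})\bigr\rangle.
\]
I would then split $r=r_0+r'$ and treat the two pieces separately.

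The $r_0$-piece acts on $v_I\otimes v_{I'}$ as multiplication by $\tfrac{1}{2}|I\cap I'|$, so its contribution to the bracket is $\tfrac{1}{2}(|I_k\cap I_p|-|J_l\cap J_q|)\Delta^{(k)}_l\Delta^{(p)}_q=\tfrac{R-C}{2}\Delta^{(k)}_l\Delta^{(p)}_q$, already log-canonical. The triangular piece $r'=\sum_{i<j}e_{i,j}\otimes e_{j,i}$ is handled by the key observation that $v_{J_l}$ and $v_{J_q}$ are \emph{lowest}-weight vectors of $\wedge^l V$ and $\wedge^q V$ under $B_+$: every $e_{j,i}$ with $j>i$ annihilates them, since a nonzero image would require an index $i\in J_q$ together with $j>i$ and $j\notin J_q$, impossible for the top-suffix set $J_q$. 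Therefore $(\rho_l\otimes\rho_q)(r')(v_{J_l}\otimes v_{J_q})=0$, so the term $(\rho_l(g)\otimes\rho_q(g))\,(\rho_l\otimes\rho_q)(r')(v_{J_l}\otimes v_{J_q})$ of the commutator vanishes entirely.

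Only the other half, $(\rho_l\otimes\rho_q)(r')\,(\rho_l(g)\otimes\rho_q(g))(v_{J_l}\otimes v_{J_q})$, survives. Expanding $\rho_l(g)v_{J_l}=\sum_I\Delta_{I,J_l}(g)\,v_I$ and dualizing $r'$ onto the covectors, this becomes a finite signed sum indexed by pairs $(i,j)$ with $i<j$, $i\in I_k\setminus I_p$, $j\in I_p\setminus I_k$, of swap-products $\pm\,\Delta_{(I_k\setminus\{i\})\cup\{j\},J_l}\cdot\Delta_{(I_p\setminus\{j\})\cup\{i\},J_q}$. A short case analysis shows that \emph{no such pair exists when $k\le p$}: for $k<p$, any $i\in I_k$ satisfies $i\ge n{-}k{+}1>n{-}p{+}1$, so $i\notin I_p$ forces $i>n{-}p{+}q$, while every $j\in I_p$ has $j\le n{-}p{+}q<i$, contradicting $i<j$; the case $k=p$ is immediate because $I_k$ and $I_p$ are nested. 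Hence for $k\le p$ the $r'$-contribution vanishes and $\{\Delta^{(k)}_l,\Delta^{(p)}_q\}=\tfrac{R-C}{2}\Delta^{(k)}_l\Delta^{(p)}_q=\tfrac{1}{2}\epsilon(k{-}p)(C-R)\Delta^{(k)}_l\Delta^{(p)}_q$ (the last equality holds since $\epsilon(k{-}p)\in\{-1,0\}$ and $R=C$ when $k=p$). The case $k>p$ follows at once from Poisson antisymmetry: $R$ and $C$ are symmetric under $(k,l)\leftrightarrow(p,q)$, so swapping arguments only flips the overall sign, accounting precisely for $\epsilon(k-p)=+1$.

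\emph{Main obstacle.} The only non-formal step is the vanishing of the $r'$-swap sum for $k\le p$, which reduces to the elementary combinatorial check above. The benefit of routing $k>p$ through antisymmetry is that one avoids the alternative---and combinatorially much more delicate---task of verifying that the non-trivial swap sum (which really does occur when $k>p$) collapses via Pl\"ucker-type cancellations to a single log-canonical monomial.
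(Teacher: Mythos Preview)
Your proof is correct and follows essentially the same route as the paper. Both arguments split $r=r_0+r'$, read off the log-canonical $r_0$-contribution $\tfrac12(R-C)$, kill the ``column side'' $r'$-term because the column sets $J_l,J_q$ are nested (your lowest-weight observation is exactly the paper's remark that one of $J,T$ contains the other, forcing a repeated column), kill the ``row side'' $r'$-term for $k\le p$ by the same elementary ordering check on $I_k,I_p$, and then invoke antisymmetry for $k>p$. The only difference is packaging: you phrase the computation via matrix coefficients on $\wedge^l V$, while the paper invokes the general minor formula \eqref{bracketdetailed} derived in Appendix~\ref{appA}; the underlying calculation is identical.
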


\begin{proof}
To prove the theorem we shall use formula \eqref{bracketdetailed} (see Appendix \ref{appA})
for a Poisson bracket  of two arbitrary minors $g_{IJ}$ and $g_{KL}$ which reads
\begin{multline*}
\{ g_{IJ}, g_{ST} \}
=\sum_{u<v}\chi_I(u) \chi_S(v) \, g_{\sigma_{u,v}(I), J} g_{\sigma_{v,u}(S), T}-\\
-\sum_{u<v}\chi_J(v) \chi_T(u) \, g_{I,\sigma_{v,u}(J)} g_{S,\sigma_{u,v}(T)}
 + \frac{1}{2} \, (| I \cap S| -| J \cap T |)\, g_{IJ}  g_{ST} ,
\end{multline*}
where $\chi_I$ is the characteristic function of the set $I$ (that is,
$\chi_I(k)=1$ for $k \in I$ and $\chi_I(k)=0$ for $k \notin I$), and
$\sigma_{v,u}(I)$ is the set obtained from $I$ by replacing $v$ with $u$.

Consider the second term on the right hand side.
In our situation, either $J \subset T$ or $T \subset J$ (or $J=T$). 
Hence, one of these subsets necessarily contains both $u$ and $v$.
After the replacement the corresponding matrix will contain two
identical columns and its determinant (either  $g_{I,\sigma_{v,u}(J)}$
or $g_{S,\sigma_{u,v}(T)}$) will vanish. Therefore,
this term always vanishes.

In the first term on the right hand side, non trivial contributions
come from the terms with $u \in I \backslash (I \cap S)$ and
$v \in S \backslash (I \cap S)$. If $p\geq k$, this implies
$v<u$ whereas the summation is over the range of $u<v$.
Hence, in this case the first term in the sum  vanishes as well.

By definition, $R= | I \cap S|$ and $C= |J \cap T|$ which yields
for $k\geq p$
$$
\{ \Delta^{(k)}_l, \Delta^{(p)}_q \} = \frac{1}{2}  ( C - R ) \Delta^{(k)}_l \Delta^{(p)}_q.
$$
The statement of the theorem follows by skew-symmetry of the Poisson bracket.
\end{proof}

\begin{example}
Let $n=2$. In this case, for 
$$
g=
\left(
\begin{array}{ll}
g_{11}  & g_{12} \\
0 & g_{22}
\end{array}
\right)
$$
we have three coordinate functions on $B_+$
$$
\Delta^{(1)}_1 = g_{22}, \hskip 0.3cm
\Delta^{(2)}_1 = g_{12}, \hskip 0.3cm
\Delta^{(2)}_2 = g_{11} g_{22}.
$$
Their Poisson brackets read
$$
\{ \Delta^{(1)}_1, \Delta^{(2)}_2\} = 0, \hskip 0.3cm
\{ \Delta^{(2)}_1, \Delta^{(2)}_2\} =0, \hskip 0.3cm
\{ \Delta^{(1)}_1, \Delta^{(2)}_1\} = -\frac{1}{2} \Delta^{(1)}_1 \Delta^{(2)}_1.
$$
Note that the determinant of $g$ is a Casimir function. Putting
$\Delta^{(2)}_2=1$, we obtain a Poisson algebra with generators
$\Delta^{(1)}_1$ and $\Delta^{(2)}_1$ and the log-canonical Poisson bracket
$\{ \Delta^{(1)}_1, \Delta^{(2)}_1\} = -\frac{1}{2} \Delta^{(1)}_1 \Delta^{(2)}_1$.
\end{example}

One can also consider the group $B_-$ of lower triangular matrices with
Poisson bracket
$$
\{ f^1 , f^2\}  = [r, f^1f^2 ] .
$$
The matrix elements of the inverse matrix $f^{-1}$ have Poisson brackets
of the same type (up to sign):
$$
\{ (f^{-1})^1 , (f^{-1})^2\}  = - [r, (f^{-1})^1(f^{-1})^2 ] .
$$
We shall denote by $\Lambda^{(k)}_l$ the solid minor of the matrix $f^{-1}$
formed by the columns with labels $n-k+1, \dots, n-k+l$ and the last
$l$ rows (see Figure \ref{lowtrmin}). 

\begin{figure}[!htbp]
\label{lowtrmin}
\begin{center}
\includegraphics[width=7cm]{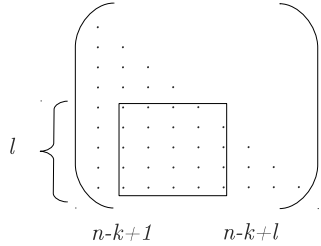}
\caption{minor $\Lambda^{(k)}_l$ }
\end{center}
\end{figure}

\no Similarly to Theorem \ref{thm:DeltaPoisson}, one can show that
$\Lambda^{(k)}_l$'s are log-canonical coordinates on an open dense chart
in $B_-$ and that their Poisson brackets are of the form
$$
\{ \Lambda^{(k)}_l, \Lambda^{(p)}_q \} = \frac{1}{2} \epsilon(k-p) ( R - C ) \Lambda^{(k)}_l \Lambda^{(p)}_q.
$$

\vskip 0.2cm

{\em Remark.} We could have chosen some solid minors of $f$ as log-canonical coordinates on $B_-$. 
Our choice  of solid minors of $f^{-1}$ is dictated by convenience of calculations in the next section.

\subsection{Poisson-Lie group $G^*_0$}
Denote by $B_-$ the group of lower triangular matrices.
For an element $g \in B_+$, let $g_d$ be its diagonal part ($(g_d)_{i,i}=g_{i,i}$ and
$(g_d)_{i,j} =0$ for $i \neq j$). The group $G^*_0$ is defined as
$$
G^*_0=\{ (g,f) \in B_+ \times B_-; \,\, g_d f_d=1\} 
$$
with product induced by the one of $B_+ \times B_-$. The standard Poisson bracket
on $G^*_0$ is defined by formulas
$$
\begin{array}{lll}
\{ g^1 , g^2\}  & = & [r, g^1g^2 ] ,  \\
\{ f^1 , f^2\}  & = & [r, f^1f^2 ] , \\
\{ g^1 , f^2\}  & = & [r_0, g^1f^2 ] .
\end{array}
$$

\begin{rem}
The corresponding Drinfeld double Lie group is $G \times G$, and the dual Poisson Lie group
to $G_0^*$ is $G_0=\{ (f,g) \in B_- \times B_+; \,\, g_d=f_d\}$.

\end{rem}

Consider the functions $\Delta^{(k)}_l$ and $\Lambda^{(k)}_l$ on $G^*_0$
for $n \geq k \geq l\geq 1$.  Note that the relation $g_df_d=1$ implies
the relation on the minors $\Delta^{(k)}_k= \Lambda^{(k)}_k$ for all $1\leq k \leq n$.

\begin{thm}
Functions $\Delta^{(k)}_l$ and $\Lambda^{(k)}_l$ (modulo the relations
$\Delta^{(k)}_k= \Lambda^{(k)}_k$) are log-canonical coordinates on $G^*_0$.
Their Poisson brackets are given by
$$
\begin{array}{lll}
\{ \Delta^{(k)}_l, \Delta^{(p)}_q \}  & = & \frac{1}{2} \varepsilon(k-p) ( C - R ) \Delta^{(k)}_l \Delta^{(p)}_q, \\
\{ \Lambda^{(k)}_l, \Lambda^{(p)}_q \} & = & \frac{1}{2} \varepsilon(k-p) ( R - C ) \Lambda^{(k)}_l 
\Lambda^{(p)}_q, \\
\{ \Delta^{(k)}_l, \Lambda^{(p)}_q \} & = & \frac{1}{2} (A - B)  \Delta^{(k)}_l \Lambda^{(p)}_q   ,
\end{array}
$$
where $A$ is the number of columns of the minor $\Delta^{(k)}_l$ which have the same labels 
as rows of the minor $\Lambda^{(p)}_q$, and B is the number of rows of the minor $\Delta^{(k)}_l$ 
which have the same labels  as columns of the minor $\Lambda^{(p)}_q$.
\end{thm}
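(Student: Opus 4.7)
The first two bracket formulas are inherited structure. The Poisson bracket on $G^*_0$ restricts to $\{g^1, g^2\} = [r, g^1 g^2]$ on the $B_+$-factor and to $\{f^1, f^2\} = [r, f^1 f^2]$ on the $B_-$-factor, so the $\{\Delta, \Delta\}$ formula is precisely Theorem~\ref{thm:DeltaPoisson}, and the $\{\Lambda, \Lambda\}$ formula is its $B_-$-analogue noted at the end of the previous subsection. Compatibility of the relation $\Delta^{(k)}_k = \Lambda^{(k)}_k$ with the bracket is automatic since $g_d f_d = 1$ forces $(f^{-1})_{ii} = g_{ii}$, so both sides equal the same product $g_{n-k+1,n-k+1}\cdots g_{nn}$ of diagonal entries.

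All the new work is in the mixed bracket. From $\{g^1, f^2\} = [r_0, g^1 f^2]$ with $r_0 = \frac{1}{2}\sum_i e_{ii}\otimes e_{ii}$ a direct computation in matrix entries gives
\[
\{g_{ab}, f_{cd}\} = \frac{1}{2}(\delta_{ac} - \delta_{bd})\, g_{ab} f_{cd},
\]
and applying the Leibniz rule to $f f^{-1} = 1$ then yields the key formula
\[
\{g_{ab}, (f^{-1})_{cd}\} = -\frac{1}{2}\, g_{ab} \bigl[\,(f^{-1})_{ca}\delta_{ad} - \delta_{bc}(f^{-1})_{bd}\,\bigr].
\]

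Writing $\Delta^{(k)}_l = \det g_{IJ}$ and $\Lambda^{(p)}_q = \det(f^{-1})_{ST}$ and expanding by cofactors in both arguments,
\[
\{\Delta^{(k)}_l, \Lambda^{(p)}_q\} = \sum_{i\in I,\, j\in J,\, s\in S,\, t\in T} C^{\Delta}_{ij}\, C^{\Lambda}_{st}\, \{g_{ij}, (f^{-1})_{st}\}.
\]
Substituting the entry bracket, the $\delta_{ad}$-term is supported on $i = t \in I \cap T$ while the $\delta_{bc}$-term is supported on $j = s \in J \cap S$. For each $i = t \in I \cap T$ the remaining double sum factorizes as $\bigl(\sum_j C^{\Delta}_{ij} g_{ij}\bigr)\bigl(\sum_s C^{\Lambda}_{si}(f^{-1})_{si}\bigr)$, which by Euler's cofactor identity (expansion along row $i$ of $g_{IJ}$ and along column $i$ of $(f^{-1})_{ST}$) equals $\Delta^{(k)}_l \cdot \Lambda^{(p)}_q$; these terms sum to $-\frac{1}{2}\,B\, \Delta^{(k)}_l \Lambda^{(p)}_q$. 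A symmetric calculation for $j = s \in J \cap S$ gives $+\frac{1}{2}\,A\, \Delta^{(k)}_l \Lambda^{(p)}_q$, and adding the two contributions produces the claimed $\frac{1}{2}(A - B)\, \Delta^{(k)}_l \Lambda^{(p)}_q$.

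The main delicate point is the bookkeeping: one has to distinguish rows from columns both in $g$ and in $f^{-1}$ and verify that the two $\delta$'s in the entry bracket pick out precisely the intersections $I \cap T$ and $J \cap S$ (matching $B$ and $A$ in the statement), rather than $I \cap S$ or $J \cap T$. Once this is correctly set up, both sums collapse via the standard cofactor identity and no extraneous ``off-diagonal'' minors of $f^{-1}$ survive, confirming that the mixed bracket is log-canonical with the asserted coefficient.
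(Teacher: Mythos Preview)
Your proof is correct. For the first two brackets you argue exactly as the paper does (the projections $G_0^*\to B_\pm$ are Poisson, so Theorem~\ref{thm:DeltaPoisson} and its $B_-$ analogue apply). For the mixed bracket your route differs in execution: the paper first rewrites the bracket as $\{g^1,(f^{-1})^2\}=g^1 r_0 (f^{-1})^2 - (f^{-1})^2 r_0 g^1$ and then invokes the general minor formula \eqref{prrez4} from Appendix~\ref{appA}, obtaining $\{g_{IJ},(f^{-1})_{ST}\}=\tfrac12(|J\cap S|-|I\cap T|)\,g_{IJ}(f^{-1})_{ST}$ in one stroke. You instead compute the entry bracket, pass to $f^{-1}$ via Leibniz, and rebuild the minors by cofactor expansion. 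Both arguments are valid; the paper's is shorter because the Appendix has already packaged the ``diagonal $r_0$'' case once and for all, while yours is self-contained and does not rely on that appendix. One simplification you overlook: in your formula
\[
\{g_{ab},(f^{-1})_{cd}\}=-\tfrac12\,g_{ab}\bigl[(f^{-1})_{ca}\delta_{ad}-\delta_{bc}(f^{-1})_{bd}\bigr],
\]
the deltas force $(f^{-1})_{ca}=(f^{-1})_{cd}$ and $(f^{-1})_{bd}=(f^{-1})_{cd}$ whenever they are nonzero, so this collapses to $\tfrac12(\delta_{bc}-\delta_{ad})\,g_{ab}(f^{-1})_{cd}$, already log-canonical at the entry level. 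With this observation the cofactor bookkeeping becomes unnecessary: multilinearity of the determinant immediately gives $\tfrac12(|J\cap S|-|I\cap T|)\,\Delta^{(k)}_l\Lambda^{(p)}_q$, which is exactly the paper's formula.
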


\begin{proof}
Natural projections $G^*_0 \to B_+$ and $G^*_0 \to B_-$ given by formulas
$(g,f) \to g$ and $(g,f) \to f$  are Poisson maps. Hence, the Poisson brackets
$\{ \Delta^{(k)}_l, \Delta^{(p)}_q \}$ and $\{ \Lambda^{(k)}_l, \Lambda^{(p)}_q \}$
are given by Theorem \ref{thm:DeltaPoisson} and by the comment in the end
of the previous section.

For the brackets $\{ \Delta^{(k)}_l, \Lambda^{(p)}_q \}$ we have
$$
\{ g^1, (f^{-1})^2 \} = g^1 r_0 (f^{-1})^2 - (f^{-1})^2 r_0 g^1.
$$
We shall use the formula (\ref{prrez4}) for two arbitrary minors from Appendix \ref{appA} which reads:

$$
\{g_{IJ}, (f^{-1})_{ST} \} = \frac{1}{2} \, \big( | J \cap S | - |I \cap T|\big)  \, g_{IJ} (f^{-1})_{ST}.
$$
\no By definition, $A=|J \cap S| $ and $B= |I \cap T|$ and the expression for 
$\{ \Delta^{(k)}_l, \Lambda^{(p)}_q \}$ follows. 

\iffalse
Since $r_0 = \frac{1}{2} \sum_{i=1}^n e_{i,i} \otimes e_{i,i}$, one obtains a
contribution of $1/2$ for each column of $\Delta^{a,b}$ which has the same label
as a row of $\Lambda^{c,d}$, and a contribution of $(-1/2)$ for each row of 
$\Delta^{a,b}$ which has the same label
as a column of $\Lambda^{c,d}$, as required.
\fi 
\end{proof}

We will be interested in the real form of the group $G^*_0$ where one imposes a relation $f^*= g^{-1}$ on the components $(g,f)$ of the group element. Note that
on this real form we have $\Lambda^{(k)}_l = \overline{\Delta}^{(k)}_l$, and  the values of $\Delta^{(k)}_k= \Lambda^{(k)}_k$ are real.

\begin{thm}
The bracket $\{ \cdot, \cdot\}^{\mathbb{R}} = i \{ \cdot, \cdot\}$ is a real Poisson bracket on $G^*_0$,
and it  verifies the reality conditions \eqref{reality} in log-canonical  coordinates 
$\Delta^{(k)}_l \in \mathbb{C}$ for
$n \geq k > l \geq 1$ and $\Delta^{(k)}_k \in \mathbb{R}$ for $1 \leq k \leq n$.
\end{thm}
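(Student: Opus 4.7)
The plan is to exploit the fact that every log-canonical coefficient appearing in the preceding theorem is an integer (in particular real), so that multiplication by $i$ turns every nonzero coefficient into a purely imaginary one. The reality conditions \eqref{reality} then reduce to deciding which brackets are forced to vanish outright and which ones survive with imaginary coefficient. First I would recall the setup on the real form: the diagonal minors $\Delta^{(k)}_k$ take real values (so they are the $x_i$-type coordinates), the off-diagonal minors $\Delta^{(k)}_l$ with $k>l$ are genuinely complex (these are the $z_a$-type coordinates), and the constraint $f^*=g^{-1}$ forces $\Lambda^{(k)}_l=\overline{\Delta^{(k)}_l}$.

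Next I would verify that $\{\cdot,\cdot\}^{\mathbb{R}}$ is a well-defined real Poisson bracket on the real form. Scaling a Poisson bivector by a constant preserves the Jacobi identity, so $i\pi$ is automatically a holomorphic Poisson bivector on $G^*_0$; what needs to be checked is that it restricts to a real bivector on the real form, which amounts to the identity $\overline{\{f,h\}}=-\{\bar f,\bar h\}$ on the generators. For $\{\Delta^{(k)}_l,\Delta^{(p)}_q\}$ this is immediate from the previous theorem: conjugation sends $\tfrac12\epsilon(k-p)(C-R)\Delta^{(k)}_l\Delta^{(p)}_q$ to $\tfrac12\epsilon(k-p)(C-R)\Lambda^{(k)}_l\Lambda^{(p)}_q$, and this is the negative of $\{\Lambda^{(k)}_l,\Lambda^{(p)}_q\}$ because replacing $\Delta$'s by $\Lambda$'s flips the formula from $(C-R)$ to $(R-C)$. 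For the mixed case I would compute $\{\Lambda^{(k)}_l,\Delta^{(p)}_q\}$ via skew-symmetry of the theorem formula and observe that the row/column ranges defining the counts $A$ and $B$ are symmetric under exchanging the two minors: the intersection between the columns of one and the rows of the other does not care which minor is called "the first", so the coefficient agrees (up to sign) with $-\tfrac12(A-B)$.

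Then I would verify the four conditions in \eqref{reality} one by one. The conditions $\mathrm{Re}\,\pi_{i,a}=\mathrm{Re}\,\pi_{a,b}=\mathrm{Re}\,\pi_{a,\bar b}=0$ follow immediately because the log-canonical coefficients $\tfrac12\epsilon(k-p)(C-R)$ and $\tfrac12(A-B)$ are real integers, so after multiplication by $i$ they become purely imaginary. The remaining condition $\pi_{i,j}=0$ requires the bracket of two diagonal minors $\Delta^{(k)}_k$ and $\Delta^{(p)}_p$ to vanish. Taking $k\ge p$, the minor $\Delta^{(p)}_p$ occupies the lower-right $p\times p$ block of the lower-right $k\times k$ block spanned by $\Delta^{(k)}_k$, so they share all $p$ rows and all $p$ columns; hence $C=R=p$ and the coefficient $\tfrac12\epsilon(k-p)(C-R)$ vanishes.

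The main obstacle is the bookkeeping in the reality check for the mixed $\Delta$--$\Lambda$ bracket: unlike in the $\Delta\Delta$ and $\Lambda\Lambda$ cases the theorem's formula is not manifestly odd under exchanging the two factors, so one must carefully match the counts $(A,B)$ for $\{\Delta^{(k)}_l,\Lambda^{(p)}_q\}$ with those for $\{\Lambda^{(k)}_l,\Delta^{(p)}_q\}$ using the explicit row/column ranges of the definitions; once this combinatorial symmetry is established, skew-symmetry of the Poisson bracket supplies the sign needed for the reality identity.
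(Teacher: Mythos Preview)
Your approach is correct and complete, but it differs from the paper's proof in how you establish that $i\{\cdot,\cdot\}$ is real on the real form. The paper works directly with the $r$-matrix presentation: using $\bar g=(f^{-1})^t$ and $\bar f=(g^{-1})^t$, it computes $\{\bar g^1,\bar g^2\}^{\mathbb{R}}$ and $\{\bar g^1,\bar f^2\}^{\mathbb{R}}$ and then invokes the fact that $r+r^t$ is invariant under the diagonal $GL(n)$-action (so $r^t$ may be replaced by $-r$ inside a commutator). This yields reality globally in one stroke, without touching the coordinate formulas. You instead verify the reality identity $\overline{\{f,h\}}=-\{\bar f,\bar h\}$ on the log-canonical generators $\Delta^{(k)}_l,\Lambda^{(k)}_l$, using the explicit $(C-R)$, $(R-C)$, $(A-B)$ coefficients from the preceding theorem together with the combinatorial symmetry $A'=A$, $B'=B$ under swapping the two minors in the mixed bracket. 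Since the coordinate chart is open and dense and the bivector is polynomial, this suffices. Your route is more hands-on and relies only on the previous theorem; the paper's route is more structural and avoids the case analysis you flag as the ``main obstacle''.

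For the reality conditions \eqref{reality}, the paper dispatches all four in one sentence (``all its defining tensors are purely imaginary''), implicitly using that a purely imaginary coefficient between two real coordinates must vanish. Your explicit verification that $C=R$ for two diagonal minors $\Delta^{(k)}_k,\Delta^{(p)}_p$ makes that step transparent. One small slip: the coefficients $\tfrac12\epsilon(k-p)(C-R)$ and $\tfrac12(A-B)$ are half-integers rather than integers in general (e.g.\ the $n=2$ example gives $-\tfrac12$), but only their reality is needed, so the argument is unaffected.
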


\begin{proof}

In order to check that the Poisson bracket $\{ \cdot, \cdot\}^{\mathbb{R}}$ is real, 
we compute

\begin{multline*}\{\overline{g}^1,\overline{g}^2\}^{\mathbb{R}}=\{(f^{-1})^{1t},(f^{-1})^{2t}\}^{\mathbb{R}}=i[r^{t},(f^{-1})^{1t}(f^{-1})^{2t}]=\\=-i[r,(f^{-1})^{1t}(f^{-1})^{2t}] =-i[r,\overline{g}^1\overline{g}^2 ]=\overline{\{g^1,g^2\}^{\mathbb{R}}} 
\end{multline*}

\no Here we have used that the element $r + r^t$, where $r^t= \frac{1}{2} \sum_{i=1}^n e_{i,i} \otimes e_{i,i} + \sum_{i<j} e_{j,i} \otimes e_{i,j}$, is invariant under the diagonal action of ${\rm GL}(n)$ by conjugation. Thus, one can replace $r^t$ by $-r$ in the commutator. The same calculation can be repeated for the bracket $\{f^1,f^2\}$. For the mixed bracket, we write

\begin{multline*}\{\overline{g}^1,\overline{f}^2\}^{\mathbb{R}}=\{(f^{-1})^{1t},(g^{-1})^{2t}\}^{\mathbb{R}}=-i[r_0^{t},(f^{-1})^{1t}(g^{-1})^{2t}]=\\=-i[r_0,(f^{-1})^{1t},(g^{-1})^{2t}] =-i[r_0,\overline{g}^1\overline{f}^2 ]=\overline{\{g^1,f^2\}^{\mathbb{R}}} 
\end{multline*}

\iffalse
$$
\{ ((g^*)^{-1})^1, ((g^*)^{-1})^2 \}^\mathbb{R} = - i\,  [ r^t, ((g^*)^{-1})^1 ((g^*)^{-1})^2],
$$
where $r^t= \frac{1}{2} \sum_{i=1}^n e_{i,i} \otimes e_{i,i} + \sum_{i<j} e_{j,i} \otimes e_{i,j}$.
Since the element $r + r^t$ is invariant under the diagonal action of ${\rm GL}(n)$ by conjugations,
we can replace $r^t$ by $-r$ in the commutator to obtain
%
$$
\{ ((g^*)^{-1})^1, ((g^*)^{-1})^2 \}^\mathbb{R} = i \, [ r, ((g^*)^{-1})^1 ((g^*)^{-1})^2]
$$
which matches the bracket for $f$'s. For the mixed bracket we compute,
%
$$
\begin{array}{lll}
\{ ((g^*)^{-1})^1, ((f^*)^{-1})^2 \}^\mathbb{R}  & =  & - i \, [r_0^t, ((g^*)^{-1})^1 ((f^*)^{-1})^2] \\
& = &  - i \, [r_0, ((g^*)^{-1})^1 ((f^*)^{-1})^2],
\end{array}
 $$
 where we have used that $r_0^t=r_0$. Using $(g^*)^{-1} = f$ and $(f^*)^{-1} =g$ we obtain
 %
 $$
 \{ f^1, g^2 \}^\mathbb{R}= -i \, [ r_0, f^1 g^2]
 $$
 which matches the definition $\{ g^1, f^2 \}^\mathbb{R}= i \, [ r_0, g^1 f^2]$ (here we used that 
 $r_0$ is invariant under exchange of the two factors in the tensor product), as required.
\fi

 The bracket $\{ \cdot, \cdot\}^{\mathbb{R}}$ verifies the conditions \eqref{reality} since all its
 defining tensors are purely imaginary. 
 \end{proof} 
 
In the next section, we denote the Poisson structure $\{ \cdot, \cdot\}^{\mathbb{R}}$ on $G^{*}_0$ by $\pi_{G^{*}_0}$.

 \begin{example}
 For $n=2$, we have three coordinate functions $\Delta^{(1)}_1, \Delta^{(2)}_2 \in \R,
 \Delta^{(2)}_1 \in \CC$.  The non-vanishing Poisson brackets read
 $$
\{ \Delta^{(1)}_1, \Delta^{(2)}_1\}^\R = -\frac{i}{2} \, \Delta^{(1)}_1 \Delta^{(2)}_1, \hskip 0.3cm
\{ \Delta^{(1)}_1, \bar{\Delta}^{(2)}_1\}^\R =  \frac{i}{2} \, \Delta^{(1)}_1 \bar{\Delta}^{(2)}_1.
$$ 
 We can actually put the Casimir function $\Delta^{(2)}_2$ equal to one  and consider upper- and lower-triangular matrices with unit determinant.
 \end{example}

\section{Poisson bracket on Poisson-Lie group $G^*$}
\label{PoisG}

The definition of the Poisson-Lie group $G^*$ is due to Semenov-Tian-Shansky \cite{STS}
and Lu-Weinstein \cite{LW}, 
$$
G^* = \{ (g,f) \in B_+ \times B_-; \,\, g_df_d=1\} .
$$
As groups, $G^*$ and $G_0^*$ are isomorphic.
However, their Poisson structures are different:
$$
\begin{array}{lll}
\{ g^1 , g^2\}  & = & [r, g^1g^2 ] ,  \\
\{ f^1 , f^2\}  & = & [r, f^1f^2 ] , \\
\{ g^1 , f^2\}  & = & [r, g^1f^2 ] .
\end{array}
$$

\begin{rem}
The corresponding Drinfeld double is again (as in the case of $G_0^*$) $G\times G$, and the dual 
Poisson-Lie group is a copy of $G \cong \{ (g,g) \in G \times G; \,\, g \in G\}$.

\end{rem}

Note that the only difference with respect to the Poisson bracket on $G_0^*$ is in the brackets between $g$ and $f$, whereas the brackets between $g$'s and the brackets between $f$'s are exactly the same as for $G_0^*$. In view of this remark, the following statement is obvious:

\begin{prop}
For the Poisson bracket on $G^*$, we have
$$
\begin{array}{lll}
\{ \Delta^{(k)}_l, \Delta^{(p)}_q \}  & = & \frac{1}{2} \varepsilon(k-p) ( C - R ) \Delta^{(k)}_l \Delta^{(p)}_q, \\
\{ \Lambda^{(k)}_l, \Lambda^{(p)}_q \} & = & \frac{1}{2} \varepsilon(k-p) ( R - C ) \Lambda^{(k)}_l 
\Lambda^{(p)}_q .
\end{array}
$$
\end{prop}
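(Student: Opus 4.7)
The plan is to exploit the remark that immediately precedes the statement: the Poisson structures on $G^*$ and $G_0^*$ are given by identical formulas for $\{g^1,g^2\}$ and $\{f^1,f^2\}$, and differ only in the mixed bracket $\{g^1,f^2\}$ (using the full $r$-matrix on $G^*$ versus $r_0$ on $G_0^*$). Since the minors $\Delta^{(k)}_l$ are polynomial functions of the entries of $g$ alone, and the minors $\Lambda^{(k)}_l$ are polynomial functions of the entries of $f^{-1}$ alone, neither of the two brackets appearing in the proposition sees the mixed term, and the answers must agree with those already established on $G_0^*$.

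Concretely, the first step is to note that the projection $p_+ \colon G^* \to B_+$, $(g,f) \mapsto g$, pulls back the Poisson structure $\{g^1,g^2\}=[r,g^1g^2]$ on $B_+$ to the corresponding piece of the $G^*$-bracket, so $p_+$ is a Poisson map. Hence for any two functions $\phi,\psi$ on $B_+$, the brackets $\{p_+^*\phi, p_+^*\psi\}_{G^*}$ and $p_+^*\{\phi,\psi\}_{B_+}$ coincide. Applying this to $\phi=\Delta^{(k)}_l$ and $\psi=\Delta^{(p)}_q$ and invoking Theorem~\ref{thm:DeltaPoisson} yields the first formula of the proposition. The same argument with the projection $p_- \colon G^* \to B_-$, $(g,f)\mapsto f$, together with the analogous result for $B_-$ stated at the end of Section~3.1 (with the roles of rows and columns swapped, giving the $(R-C)$ factor), yields the second formula.

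There is essentially no obstacle: the only subtle point is verifying the Poisson-map property of the projections $p_\pm$, and this is immediate from the defining formulas because the pieces $\{g^1,g^2\}=[r,g^1g^2]$ and $\{f^1,f^2\}=[r,f^1f^2]$ of the $G^*$-bracket are intrinsic to the $g$- and $f$-components respectively, without reference to the mixed bracket. In particular there is no need to redo the combinatorial computation that produced $\tfrac12\varepsilon(k-p)(C-R)$; it is imported directly from Theorem~\ref{thm:DeltaPoisson}.
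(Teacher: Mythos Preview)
Your proposal is correct and follows essentially the same approach as the paper. The paper declares the proposition ``obvious'' in view of the preceding remark that the brackets $\{g^1,g^2\}$ and $\{f^1,f^2\}$ on $G^*$ coincide with those on $G_0^*$; you have simply made this explicit by invoking the Poisson projections $p_\pm$ to $B_\pm$ and importing Theorem~\ref{thm:DeltaPoisson} and its $B_-$ analogue, exactly as was done in the paper's proof for $G_0^*$.
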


Note that the Poisson-Lie group $G^*$ also admits a real form defined by the equation 
$f^*=g^{-1}$. As before, this implies $\Lambda^{(k)}_l= \bar{\Delta}^{(k)}_l$. In contrast
to the group $G^*_0$, the Poisson brackets between $\Delta$'s and $\bar{\Delta}$'s are no
longer log-canonical. More precisely, we can use equation \eqref{grf-frg} (see Appendix~\ref{appA}) for arbitrary minors of matrices $g$ and $f^{-1}$ which reads
\begin{multline*}
\{ g_{IJ}, (f^{-1})_{ST} \}
=\sum_{u<v}\chi_J(v) \chi_S(v) \, g_{I,\sigma_{v,u}(J)} (f^{-1})_{\sigma_{v,u}(S), T}-\\
-\sum_{u<v}\chi_I(u) \chi_T(u) \, g_{\sigma_{u,v}(I),J} (f^{-1})_{S,\sigma_{u,v}(T)}
 + \frac{1}{2} \, (| J \cap S| -| I \cap T |)\, g_{IJ}  (f^{-1})_{ST} 
\end{multline*}

Recall \cite{FZ2} that all minors of the matrix $g$ are Laurent polynomials in the 
minors $\Delta^{(k)}_l$ (see Appendix \ref{appB}), and similarly all minors of the matrix $f^{-1}$ are Laurent polynomials in the minors $\Lambda^{(k)}_l=\bar{\Delta}^{(k)}_l$. Hence, the right hand side of the formula above is a Laurent polynomial in the minors $\Delta^{(k)}_l, \bar{\Delta}^{(k)}_l$ and one can apply the tropicalization machinery of Section \ref{logtrop}.

\begin{example}   \label{ex:Gstarn=2}
For $n=2$, we use the same functions as in the case of $G^*_0$,
$\Delta^{(1)}_1, \Delta^{(2)}_2 \in \R, \Delta^{(2)}_1 \in \CC$. The Poisson brackets 
 $$
\{ \Delta^{(1)}_1, \Delta^{(2)}_1\}^\mathbb{R} = -\frac{i}{2} \, \Delta^{(1)}_1 \Delta^{(2)}_1, \hskip 0.3cm
\{ \Delta^{(1)}_1, \bar{\Delta}^{(2)}_1\}^\mathbb{R} =  \frac{i}{2} \, \Delta^{(1)}_1 \bar{\Delta}^{(2)}_1
$$
are the same as for $\pi_{G^*_0}$. The new contribution is 
$$
\{ \Delta^{(2)}_1, \bar{\Delta}^{(2)}_1\}^\mathbb{R} =  i \big( g_{11}(f^{-1})_{11} - g_{22}(f^{-1})_{22} \big)=
i \left( \frac{\Delta^{(2)}_2}{\Delta^{(1)}_1} \right)^2 - i \left( \Delta^{(1)}_1\right)^2.
$$ 
Here we have used that $\Delta^{(2)}_2 = g_{11} g_{22}$. The minor $\Delta^{(2)}_2$ is a Casimir
function.

Using notation $\Delta^{(k)}_k = \exp(t \zeta^{(k)}_k)$ (for convenience we are using
the notation $\zeta^{(k)}_k$ instead of $\xi^{(k)}_k$) and  $\Delta^{(2)}_1=\exp(t \zeta^{(2)}_1 + i \phi^{(2)}_1)$  we obtain the following inequalities defining the cone $\C(\pi, \Delta, \bar{\Delta})$:
$$
\zeta^{(2)}_1 > \zeta^{(2)}_2 - \zeta^{(1)}_1, \hskip 0.3cm
\zeta^{(2)}_1 > \zeta^{(1)}_1.
$$
\no The non-vanishing component of the Poisson bracket $\{ \cdot, \cdot \}_\infty$ reads
$$
\{\zeta^{(1)}_1, \phi^{(2)}_1\}_\infty = -\frac{1}{2} .
$$
\no Both $\zeta^{(2)}_2$ and $\zeta^{(2)}_1$ are Casimir functions for this bracket.

\end{example}

\begin{prop}
In coordinates $\Delta^{(k)}_l, \bar{\Delta}^{(k)}_l$, the log-canonical part of the Poisson bracket $\pi_{G^*}$ is equal to the Poisson bracket $\pi_{G^*_0}$.
\end{prop}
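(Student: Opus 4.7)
The plan is to compare $\pi_{G^*}$ and $\pi_{G^*_0}$ bracket-by-bracket in the coordinates $\Delta^{(k)}_l, \bar\Delta^{(k)}_l$ and isolate the log-canonical contribution in each case.

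First, note that $\pi_{G^*}$ and $\pi_{G^*_0}$ agree on $\{g^1, g^2\}$ and $\{f^1, f^2\}$ -- both are given by $[r, g^1 g^2]$ and $[r, f^1 f^2]$ respectively. Hence the induced brackets $\{\Delta^{(k)}_l, \Delta^{(p)}_q\}$ and $\{\bar\Delta^{(k)}_l, \bar\Delta^{(p)}_q\}$ coincide for the two Poisson structures and are already log-canonical. The entire discrepancy is confined to the mixed bracket, where the decomposition $r=r_0+r'$ gives
$$\{g^1, f^2\}_{G^*} = [r_0, g^1 f^2] + [r', g^1 f^2] = \{g^1, f^2\}_{G^*_0} + [r', g^1 f^2].$$

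The second step is to apply formula \eqref{grf-frg} from Appendix \ref{appA} to the $G^*$ bracket $\{g_{IJ}, (f^{-1})_{ST}\}$. Its third summand, $\tfrac{1}{2}(|J\cap S|-|I\cap T|)\,g_{IJ}(f^{-1})_{ST}$, is exactly the log-canonical formula \eqref{prrez4} defining $\pi_{G^*_0}$ on the solid minors. The first two summands encode the $[r',g^1f^2]$-correction as products of ``neighbor'' minors $g_{I,\sigma_{v,u}(J)}(f^{-1})_{\sigma_{v,u}(S),T}$ and $g_{\sigma_{u,v}(I),J}(f^{-1})_{S,\sigma_{u,v}(T)}$. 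The remaining task is to show that these neighbor-minor summands contribute only non-log-canonical Laurent terms in the coordinates $\Delta^{(k)}_l, \bar\Delta^{(p)}_q$; that is, they do not produce any $c\cdot \Delta^{(k)}_l\bar\Delta^{(p)}_q$ monomial.

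This last step is the main obstacle. Invoking the Fomin-Zelevinsky Laurent expansions of Appendix \ref{appB}, each neighbor minor differs from a solid minor by a single elementary index swap $\sigma_{v,u}$, so its expansion in the solid-minor basis is a genuine Laurent polynomial whose monomials necessarily involve solid minors distinct from $\Delta^{(k)}_l$ (and similarly for the $\bar\Delta$-factor). Therefore no product of two such expansions can collapse to a scalar multiple of $\Delta^{(k)}_l\bar\Delta^{(p)}_q$. The prototype is the $n=2$ computation in Example \ref{ex:Gstarn=2}, where the $[r',g^1f^2]$-contribution reads $i\bigl((\Delta^{(2)}_2/\Delta^{(1)}_1)^2 - (\Delta^{(1)}_1)^2\bigr)$, a Laurent polynomial visibly devoid of any $\Delta^{(2)}_1\bar\Delta^{(2)}_1$ term. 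A systematic verification proceeds by inspecting exponent vectors in the Fomin-Zelevinsky expansion: every nontrivial monomial carries a nonzero exponent on some solid minor outside the pair $(\Delta^{(k)}_l,\bar\Delta^{(p)}_q)$, precluding collapse to the log-canonical monomial. Combining all three steps, the log-canonical part of $\pi_{G^*}$ reduces to the third summand of \eqref{grf-frg}, which equals $\pi_{G^*_0}$.
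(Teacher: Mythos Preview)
Your steps 1--3 are correct and match the paper: the non-mixed brackets agree, and the third summand of \eqref{grf-frg} reproduces the $G^*_0$ bracket. The gap is in step 4. You assert that the neighbor-minor terms cannot contribute a monomial $c\,\Delta^{(k)}_l\bar\Delta^{(p)}_q$, but the justification (``inspecting exponent vectors'', ``monomials necessarily involve solid minors distinct from $\Delta^{(k)}_l$'') is a restatement of the claim, not a proof. Concretely: since the $g$-factor expands only in $\Delta$'s and the $f^{-1}$-factor only in $\bar\Delta$'s, what you must show is that the Laurent expansion of $g_{I,\sigma_{v,u}(J)}$ contains no monomial equal to $c_1\Delta^{(k)}_l$, and likewise on the $\bar\Delta$ side. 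Your text gives no mechanism that forces this.

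The paper supplies that mechanism via a grading. Conjugation by $d_\lambda=\mathrm{diag}(\lambda,\dots,\lambda^n)$ gives $\deg g_{IJ}=\sum i_s-\sum j_r$, and one checks two things: (i) the neighbor minors $g_{I,\sigma_{v,u}(J)}$, $g_{\sigma_{u,v}(I),J}$ are still minors of the lower-right $k\times k$ block $g^{(k)}$ (this uses upper-triangularity: if a replaced column index fell below $n-k+1$ the minor would vanish); (ii) among size-$l$ minors of $g^{(k)}$, the solid minor $\Delta^{(k)}_l$ has the strictly smallest degree. Since the grading is multiplicative on Laurent monomials, every monomial in the Laurent expansion of a neighbor minor is homogeneous of degree strictly larger than $\deg\Delta^{(k)}_l$, hence none can equal $c_1\Delta^{(k)}_l$. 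This is exactly the missing ingredient in your argument; without it (or an equivalent device) step 4 is an unproven assertion.
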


Recall that in coordinates $\Delta^{(k)}_l, \bar{\Delta}^{(k)}_l$ the Poisson bracket $\pi_{G^*_0}$ verifies reality conditions \eqref{reality}. Hence, so does the Poisson bracket  $\pi_{G^*}$. 

\begin{proof}
Let $a: \R_+ \times B_+ \to B_+$ be the following action of the multiplicative group
$\R_+$:
$$
a: (\lambda, g) \to d_\lambda g d_\lambda^{-1}
$$
with  $d_\lambda={\rm diag}(\lambda, \lambda^2, \dots, \lambda^n)$. This action
introduces a grading on the set of regular functions on $B_+$. In particular,
the grading of the minors is given by
\be \label{grading}
{\rm deg}(g_{IJ}) = \sum_{s=1}^l i_s - \sum_{r=1}^l j_r.
\ee
Let $g^{(k)}$ be the submatrix of $g$ with rows and columns $\{ n-k+1, \dots, n\}$ (the lower right corner of size $k$). Note that the minor $\Delta^{(k)}_l$ is the
minor of $g^{(k)}$ of size $l$ which has the lowest possible grading. 

For the matrix $f^{-1}=g^*$ the action of $\mathbb{R}_+$ reads
$(\lambda, f^{-1}) \to d_\lambda^{-1} f^{-1} d_\lambda$, and the grading is given by
$$
{\rm deg}(f^{-1}_{IJ}) = \sum_{r=1}^l j_r - \sum_{s=1}^l i_s.
$$
In particular, the minor $\Lambda^{(k)}_l=\bar{\Delta}^{(k)}_l$ is the minor of
$(g^{(k)})^*$ with the lowest possible grading.

Consider the Poisson bracket $\{ \Delta^{(k)}_l, \bar{\Delta}^{(p)}_q\}$.
Note that the minors $g_{I,\sigma_{v,u}(J)}$ and $g_{\sigma_{u,v}(I),J}$
for $u<v$ are in fact minors of the matrix $g^{(k)}$. Indeed, in $g_{\sigma_{u,v}(I),J}$ we are replacing the row number $u$ with the row number $v>u$, hence we cannot leave the range $\{ n-k+1, \dots, n\}$. In $g_{I,\sigma_{v,u}(J)}$, we are replacing the column number $v$ with the column number $u<v$. However, $g$ is an upper triangular matrix, and its minor $g_{IJ}$ with $I \subset \{ n-k+1, \dots, n\}$  is non-vanishing only if $J \subset \{ n-k+1, \dots, n\}$. A similar consideration applies to the minors of $f^{-1}$. 

We conclude that the first two terms on the right hand side of the Poisson bracket $\{ \Delta^{(k)}_l, \bar{\Delta}^{(p)}_q\}$ are linear combinations of functions of degree strictly greater than the one of the product $\Delta^{(k)}_l \bar{\Delta}^{(p)}_q$. Hence, the only log-canonical contribution in this Poisson bracket comes from the third term which coincides with the Poisson bracket on $G^*_0$,
$$
\{ \Delta^{(k)}_l, \bar{\Delta}^{(p)}_q \}^{\mathbb{R}}_{G^{*}_0}
= \frac{i}{2} \, (A-B)\, \Delta^{(k)}_l \bar{\Delta}^{(p)}_q ,
$$
as required.
\end{proof}

One of the main results of this paper is the description of the tropicalization of the Poisson bracket $\pi_{G^*}$. We shall use the following notation for the scaling limit:
$\Delta^{(k)}_l = \exp(t \zeta^{(k)}_l + i \phi^{(k)}_l)$. Note that $\phi^{(k)}_k=0$ for all $k=1, \dots, n$ and we are using the more uniform notation $\zeta^{(k)}_k$ for the scaling limit of the real variables $\Delta^{(k)}_k$ instead of (the more logical) $\xi^{(k)}_k$.

\begin{thm}
The cone $\C(\pi_{G^*}; \Delta, \bar{\Delta})$ is isomorphic to the Gelfand-Zeitlin cone $\C_{GZ}$. The isomorphism $\sigma: \C_{GZ} \to \C(\pi_{G^*}; \Delta, \bar{\Delta})$ is given by formula
$$
\zeta^{(k)}_l = \lambda^{(k)}_1 + \dots + \lambda^{(k)}_l.
$$
\end{thm}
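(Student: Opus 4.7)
My plan is to read off the defining inequalities of $\C(\pi_{G^*}; \Delta, \bar\Delta)$ from the correction terms of the mixed Poisson brackets and to check that, under the substitution $\zeta^{(k)}_l = \lambda^{(k)}_1 + \cdots + \lambda^{(k)}_l$, they coincide with the standard Gelfand--Zeitlin interlacing inequalities $\lambda^{(k)}_l > \lambda^{(k-1)}_l > \lambda^{(k)}_{l+1}$.

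By log-canonicity of the brackets among the $\Delta$'s and among the $\bar\Delta$'s separately, the sets $F_{i,j}$ they contribute are empty, so only the mixed brackets $\{\Delta^{(k)}_l,\bar\Delta^{(p)}_q\}^{\mathbb{R}}$ impose nontrivial constraints on the cone. For these, the two non-log-canonical summations in the formula for $\{g_{IJ},(f^{-1})_{ST}\}$ produce sums of products of minors of $g^{(m)}$ and $(g^{(m)})^{*}$ with $m=\max(k,p)$; expanding each such minor as a Laurent polynomial in the solid minors via Fomin--Zelevinsky (see Appendix \ref{appB}) yields an expression $\sum_{I,K} c_{I,K}\,\Delta^I\bar\Delta^K$, and each nonzero monomial contributes the tropical inequality
$$\zeta^{(k)}_l + \zeta^{(p)}_q \,>\, \sum_{r,s}\bigl(I^{(r)}_s + K^{(r)}_s\bigr)\zeta^{(r)}_s.$$
The $n=2$ computation in Example \ref{ex:Gstarn=2} illustrates the pattern: the two surviving inequalities translate, via the substitution, into $\lambda^{(2)}_1 > \lambda^{(1)}_1$ and $\lambda^{(1)}_1 > \lambda^{(2)}_2$, which is exactly the GZ interlacing for $n=2$.

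To handle general $n$, I would isolate two distinguished families of brackets whose correction terms already produce the interlacing inequalities: one family of near-diagonal mixed brackets should yield $\zeta^{(k)}_l + \zeta^{(k-1)}_{l-1} > \zeta^{(k)}_{l-1} + \zeta^{(k-1)}_l$, which under the substitution reduces to $\lambda^{(k)}_l > \lambda^{(k-1)}_l$, and a companion family should yield $\lambda^{(k-1)}_l > \lambda^{(k)}_{l+1}$. The remaining brackets produce many further correction monomials, but I expect each such monomial's exponent vector to lie in the nonnegative cone generated by those of the distinguished family, so that the associated tropical inequalities are automatically implied by the GZ ones. An induction on $n$ is natural here, since the correction terms in $\{\Delta^{(k)}_l,\bar\Delta^{(p)}_q\}^{\mathbb{R}}$ involve only minors of the lower-right block of size $\max(k,p)$, which lets one stratify the cone inequalities by this size.

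The main obstacle is precisely this redundancy check: the raw list of monomials arising from the Poisson bracket formula and the Fomin--Zelevinsky expansion is much larger than the $n(n-1)$ GZ interlacing inequalities, and showing that all extraneous inequalities are consequences of the distinguished ones requires a careful combinatorial analysis of the exponents appearing in the solid-minor Laurent expansions of shifted minors of $g^{(m)}$. This convex-geometric bookkeeping, rather than any conceptual ingredient, is the technical heart of the argument; once it is in place, the identification of $\C(\pi_{G^*}; \Delta, \bar\Delta)$ with $\C_{GZ}$ and the fact that the substitution $\sigma$ is the desired linear isomorphism both follow immediately from matching generators and a dimension count.
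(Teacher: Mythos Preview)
Your overall two-inclusion strategy matches the paper's, and your identification of the mixed brackets $\{\Delta^{(k)}_l,\bar\Delta^{(p)}_q\}$ as the only source of nontrivial inequalities is correct. For the inclusion $\C(\pi_{G^*};\Delta,\bar\Delta)\subset\sigma(\C_{GZ})$ the paper does essentially what you sketch: it looks at the single bracket $\{\Delta^{(k)}_l,\bar\Delta^{(k)}_l\}$ (same indices), picks out the two weight-$(-2l(k-l)+2)$ correction minors, and chooses in each a specific $l$-path (the lowest, respectively the highest, in the Lindstr\"om expansion) whose weight-ratio with $\Delta^{(k)}_l$ is exactly $u^{(k)}_l=\zeta^{(k)}_l+\zeta^{(k-1)}_{l-1}-\zeta^{(k)}_{l-1}-\zeta^{(k-1)}_l$ and $-v^{(k)}_l$; these give the two GZ families directly. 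So your ``distinguished families'' idea is right in spirit, though the paper localizes it to the diagonal brackets and uses path combinatorics rather than a generic Laurent expansion.

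The genuine gap is in the reverse inclusion $\sigma(\C_{GZ})\subset\C(\pi_{G^*};\Delta,\bar\Delta)$, which is precisely your ``redundancy check.'' You frame this as convex-geometric bookkeeping on exponent vectors and hope that the extraneous inequalities lie in the cone generated by the GZ ones, but you have no mechanism for this, and a direct attack on the Fomin--Zelevinsky Laurent expansions of all shifted minors would be formidable. The paper sidesteps this entirely with a different idea: it passes to the planar-network weight coordinates on $B_+$, so that by Lindstr\"om's lemma every minor appearing in the correction terms is a positive sum of path-weight monomials $h_\gamma$. Then it invokes Theorem~3 of \cite{aps}, which says the tropical Gelfand--Zeitlin map is a bijection from a principal chamber $\C_0$ onto $\C_{GZ}$, and on $\C_0$ the multipath $\gamma^{(k)}_l$ strictly dominates every other $l$-path in $\Gamma_s^{(k)}$. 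This single maximality statement kills \emph{all} correction monomials at once: $|\Delta^{(k)}_l|$ dominates every $|h_\gamma|$, so the log-canonical term dominates every correction term, for every mixed bracket simultaneously. The planar-network/tropical-GZ machinery is the missing key lemma in your proposal; without it (or an equivalent), the redundancy check is not just bookkeeping but an open problem.
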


In the proof, we are using the machinery of planar networks and the notion of the tropical Gelfand-Zeitlin map. For more information and notation, we refer the reader to Appendices \ref{appB} and \ref{appC}.

\begin{proof}
The map $\sigma$ defines an isomorphism of vector spaces of dimensions $n(n+1)/2$. We shall first prove that $\sigma(\C_{GZ}) \subset \C(\pi_{G^*}; \Delta, \bar{\Delta})$. Recall that by Theorem 3 in \cite{aps}  the tropical Gelfand-Zeitlin map establishes a bijection between the Gelfand-Zeitlin cone and the principal chamber $\C_0 \subset \R^{n(n+1)/2}$.  On this  chamber, the weight of the multi-path $\gamma^{(k)}_l$ is strictly bigger than the weights of all the other $l$-paths in the subnetwork $\Gamma_s^{(k)}$.

We shall use the coordinates on $B_+$ defined by the planar network $\Gamma^{(n)}_s$ with the weights parametrized as $w(e)=\exp(t\zeta(e) + i \phi(e))$. Then, the weight  of the multi-path $\gamma$ is given by the function
$$
h_\gamma(\zeta, \phi)= \prod_e w(e) = \exp\big(t \sum_{e \in \gamma} \zeta(e) 
+ i \sum_{e\in \gamma} \phi(e) \big) .
$$
By Lindstr\"om Lemma \cite{FZ}, minors of the matrix $M(\Gamma, w)$ are linear combinations of functions $h_\gamma(\zeta, \phi)$. Hence, we obtain the following expression for the Poisson bracket of two minors $\Delta^{(k)}_l$ and $\bar{\Delta}^{(x)}_y$:
$$
\{ \Delta^{(k)}_l, \bar{\Delta}^{(x)}_y \}^{\mathbb{R} }
= \frac{i}{2} \, (A-B)\, \Delta^{(k)}_l \bar{\Delta}^{(x)}_y +
\sum_{\gamma, \tilde{\gamma}} c(\gamma, \tilde{\gamma}) h_\gamma(\zeta, \phi) 
h_{\tilde{\gamma}}(\zeta, - \phi).
$$
\no Here $\gamma$'s are paths in $\Gamma_s^{(k)}$, $\tilde{\gamma}$'s are paths in $\Gamma_s^{(x)}$, the complex conjugation corresponds to replacing $\phi(e) \mapsto - \phi(e)$ and $c(\gamma,\tilde{\gamma})$ are some coefficients.

Note that 
$$|h_\gamma(\zeta, \phi)|=\exp\big(t\sum_{e\in\gamma}\zeta(e)\big) \indent \text{and} \indent |\Delta^{(k)}_{l}|=\exp\big(t\sum_{e\in\gamma^{k}_l}\zeta(e)\big),$$

\no and assume that parameters $\zeta$ belong to the interior of the principal chamber $\C_0$. Then, the maximality property of the paths $\gamma^{(k)}_l$ implies
$$
\sum_{e \in \gamma^{(k)}_l} \zeta(e) >  \sum_{e \in \gamma} \zeta(e), 
\hskip 0.3cm
\sum_{e \in \gamma^{(x)}_y} \zeta(e) >  \sum_{e \in \tilde{\gamma}} \zeta(e)
$$
for all paths $\gamma, \tilde{\gamma}$ in the sum above. Hence, $\Delta^{(k)}_l$ dominates $h_\gamma(\zeta, \phi)$ and  $\bar{\Delta}^{(x)}_y$ dominates $h_{\tilde{\gamma}}(\zeta, -\phi).$ By definition of $\C(\pi_{G^*}; \Delta, \bar{\Delta})$, we conclude that $\zeta \in \C(\pi_{G^*}; \Delta, \bar{\Delta})$, as required.

Next, we shall show that $\C(\pi_{G^*}; \Delta, \bar{\Delta}) \subset \sigma(\C_{GZ})$.
In order to do that, we consider the Poisson bracket $\{ \Delta^{(k)}_l, \bar{\Delta}^{(k)}_l\}$.
By formula \eqref{grading} the weight of minors $\Delta^{(k)}_l$ and $\bar{\Delta}^{(k)}_l$ is given by
$$
\frac{\left( (n-k+1) + (n-k+l) \right) l}{2} - \frac{\left( ( n-l+1) + n \right) l}{2} = -l(k-l).
$$
The log-canonical contribution (of weight $-2l(k-l)$) vanishes since in this case $A=B$.
There are two contributions in the Poisson bracket of weight $-2l(k-l)+ 2$ which are of the form
$$
\begin{array}{lll}
\{ \Delta^{(k)}_l, \bar{\Delta}^{(k)}_l\}^{\mathbb{R}} & = & i\left| g_{\{n-k+1, \dots, n-k+l; \, n-l, n-l+2, \dots, n\}} \right|^2
\\
& - & i\left|g_{\{n-k+1, \dots, n-k+l-1, n-k+l+1; \, n-l+1, \dots, n\}} \right|^2   \vspace{0.2cm}\\
& + & 
 terms \,\, of  \,\, higher \,\, weight\,\, 
%\geq -2l(k-l) +4
\end{array}
$$
Note that the explicit form of the minors $\Delta^{(k)}_l$ is as follows: $\Delta^{(k)}_l= \Delta_{\{n-k+1, \dots, n-k+l; \, n-l+1, \dots, n\}}$ (for Lindstr\"om Lemma of $\Delta^{(k)}_l$ see Figure~\ref{paths3_2}). By the Linstr\"om's Lemma, the minors $g_{\{n-k+1, \dots, n-k+l-1, n-k+l+1; \, n-l+1, \dots, n\}}$ and $g_{\{n-k+1, \dots, n-k+l; \, n-l, n-l+2, \dots, n\}}$ can be expressed as sums of weights of $l$-paths. Each product of two weights of $l$-paths in the expression for $|g |^2$ gives rise to a defining inequality for the cone $\C(\pi_{G^*}; \Delta, \bar{\Delta})$. Our task is to find the Gelfand-Zeitlin inequalities among them.

\begin{figure}[!htbp]  
\begin{center}
\includegraphics[width=8cm]{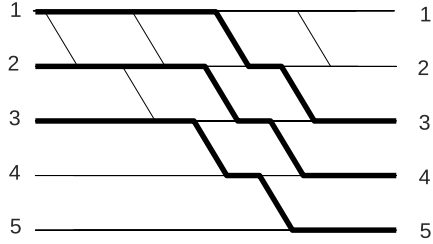}
\caption{Lindstr\"om Lemma presentation of $\Delta^{(k)}_l, k=5, l=3$ }
\end{center}
\label{paths3_2}
\end{figure}

We start with the minor $g_{\{n-k+1, \dots, n-k+l-1, n-k+l+1; \, n-l+1, \dots, n\}}$. In this case, choose in both $\Delta$ and $\bar{\Delta}$ the $l$-path $L_1$ shown on Fig. \ref{area1}  
(in fact, this is the lowest  possible $l$-path given by the Lindstr\"om's Lemma for the new minor). The picture shows that the ratio of $\Delta^{(k)}_l$ and of the weight of $L_1$  is given by
$$
\exp( t \times {\rm sum \,\, of \,\, weights \,\, of \,\, shadow \,\, regions}).
$$
By Lemma 9 in \cite{aps}, this sum of weights is given by
$$
u^{(k)}_l= \zeta^{(k)}_l + \zeta^{(k-1)}_{l-1} - \zeta^{(k)}_{l-1} - \zeta^{(k-1)}_l.
$$
The corresponding inequality reads $u^{(k)}_l >0$, and this gives one of the 
families of Gelfand-Zeitlin inequalities.

\begin{figure}[!htbp]  
\begin{center}
\includegraphics[width=8cm]{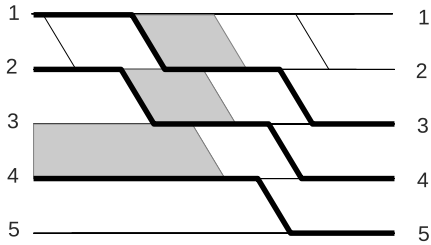}
\caption{Lowest $l$-path path $L_1$ and pictorial presentation of $\Delta^{(k)}_l/L_1$ }
\label{area1}
\end{center}
\end{figure}

In a similar fashion, we consider the minor $g_{\{n-k+1, \dots, n-k+l; \, n-l, n-l+2, \dots, n\}}$. In this case, we choose the highest $l$-path $L_2$ given by the Lindstr\"om Lemma. This
$l$-path is  shown on Fig. \ref{area2}. Again, we obtain a pictorial expression of the ratio of 
$\Delta^{(k)}_l$ and of the weight of $L_2$,
$$
\exp( - t \times {\rm sum \,\, of \,\, weights \,\, of \,\, shadow \,\, regions}).
$$
By Lemma 9 in \cite{aps}, the sum of weights reads
$$
v^{(k)}_l = \zeta^{(k-1)}_{l-1} + \zeta^{(k)}_{l+1} - \zeta^{(k)}_l - \zeta^{(k-1)}_l,
$$
and the corresponding inequality $v^{(k)}_l < 0$ gives the second family of Gelfand-Zeitlin
inequalities, as required. 

\begin{figure}[!htbp]  
\begin{center}
\includegraphics[width=8cm]{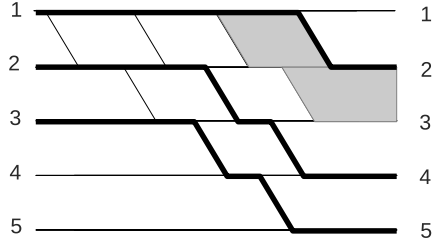}
\caption{Highest $l$-path path $L_2$ and pictorial presentation of $\Delta^{(k)}_l/L_2$ }
\label{area2}
\end{center}
\end{figure}

\end{proof}

\begin{example}
Making the substitution
$$
\xi^{(1)}_1 = \lambda^{(1)}_1, \hskip 0.3cm
\zeta^{(2)}_1 = \lambda^{(2)}_1, \hskip 0.3cm
\xi^{(2)}_2 = \lambda^{(2)}_1 + \lambda^{(2)}_2
$$
one can easily check that the inequalities of Example \ref{ex:Gstarn=2}
are equivalent to the Gelfand-Zeitlin inequalities for $n=2$:
$$
\lambda^{(2)}_1 > \lambda^{(1)}_1 > \lambda^{(2)}_2.
$$
\end{example}

\begin{thm} \label{thmcan}
For $\pi_{G^*}$ in coordinates $\Delta, \bar{\Delta}$, the Poisson bracket
$\{ \cdot, \cdot\}_\infty$ has the following properties:
$$
\{ \zeta^{(k)}_l, \phi^{(p)}_q\}_\infty = 0
$$
if $k\geq p$, or if $k<p$ and $k-l \geq p-q$. Furthermore,
$$
\{ \zeta^{(k)}_l, \phi^{(k+1)}_l \}_\infty = -\frac{1}{2}.
$$ 
\end{thm}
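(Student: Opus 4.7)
The plan is to reduce everything to the log-canonical approximation of $\pi_{G^*}$, which by the preceding Proposition agrees with $\pi_{G^*_0}$ in coordinates $\Delta^{(k)}_l, \bar\Delta^{(k)}_l$. Since the scaling limit $\{\cdot,\cdot\}_\infty$ only sees the log-canonical part, the formulas \eqref{brs} apply and yield
$$
\{\zeta^{(k)}_l, \phi^{(p)}_q\}_\infty = \tfrac{1}{2}\,\mathrm{Im}\bigl(\pi_{(k,l),(p,q)} - \pi_{(k,l),\overline{(p,q)}}\bigr).
$$
Plugging in the explicit constants from Section~\ref{poisBG0},
$\pi_{(k,l),(p,q)} = \tfrac{i}{2}\varepsilon(k-p)(C-R)$ and $\pi_{(k,l),\overline{(p,q)}} = \tfrac{i}{2}(A-B)$, this becomes
$$
\{\zeta^{(k)}_l, \phi^{(p)}_q\}_\infty = \tfrac{1}{4}\bigl(\varepsilon(k-p)(C-R)-(A-B)\bigr).
$$

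The key observation is a combinatorial identification of $A,B,C,R$ from the row/column index sets. The columns of $\Delta^{(k)}_l$ are $\{n-l+1,\dots,n\}$ and the rows of $\Lambda^{(p)}_q$ are $\{n-q+1,\dots,n\}$, so $A=\min(l,q)=C$. Similarly the rows of $\Delta^{(k)}_l$ are $\{n-k+1,\dots,n-k+l\}$ and the columns of $\Lambda^{(p)}_q$ are $\{n-p+1,\dots,n-p+q\}$, which is the same pair of index sets whose intersection defines $R$, so $B=R$. Therefore
$$
\{\zeta^{(k)}_l, \phi^{(p)}_q\}_\infty = \tfrac{1}{4}(C-R)\bigl(\varepsilon(k-p)-1\bigr).
$$

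The remaining content is a case check. For $k>p$, $\varepsilon(k-p)=1$ kills the bracket. For $k=p$ one computes $C=R=\min(l,q)$ directly. For $k<p$ with $k-l\geq p-q$, the inequality rewrites as $q\geq p-k+l$; combined with $p>k$ this forces $q>l$, so $C=\min(l,q)=l$, and the intersection defining $R$ saturates to the full set $\{n-k+1,\dots,n-k+l\}$, giving $R=l$ as well, hence $C-R=0$. Finally, for $(p,q)=(k+1,l)$ one has $\varepsilon(k-p)-1=-2$, and the intersection $\{n-k+1,\dots,n-k+l\}\cap\{n-k,\dots,n-k+l-1\} = \{n-k+1,\dots,n-k+l-1\}$ has cardinality $l-1$, while $C=l$, so $C-R=1$ and the bracket equals $-\tfrac{1}{2}$.

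The main obstacle is quite mild: it is just the bookkeeping of the intersection cardinalities $A,B,C,R$ as functions of $(k,l,p,q)$, together with the observation $A=C$, $B=R$ that turns the whole expression into a single multiple of $(C-R)(\varepsilon(k-p)-1)$. Once this is in place, every claim in the theorem follows by inspection.
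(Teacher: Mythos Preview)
Your proof is correct and follows essentially the same route as the paper: derive $\{\zeta^{(k)}_l,\phi^{(p)}_q\}_\infty=\tfrac{1}{4}\bigl(\varepsilon(k-p)(C-R)-(A-B)\bigr)$ from the log-canonical data of $\pi_{G_0^*}$, observe $A=C$ and $B=R$, and finish by a case analysis on $(k,l,p,q)$. Your version is in fact slightly more explicit than the paper's, since you handle the $k=p$ case directly (the paper's containment argument for ``$k\le p$ and $k-l\ge p-q$'' only literally covers $k=p$ when $q\ge l$), but this is a cosmetic difference.
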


\begin{proof}
First, we combine the formulas

$$
\{ \Delta^{(k)}_l, \Delta^{(p)}_q \}^\mathbb{R}_{G_0^{*}} = \frac{i}{2} \, \ep(k-p) \, (C-R) \, \Delta^{(k)}_l \Delta^{(p)}_q,
$$
$$
\{ \Delta^{(k)}_l, \bar{\Delta}^{(p)}_q \}^\mathbb{R}_{G_0^{*}} = \frac{i}{2} \, (A-B) \, \Delta^{(k)}_l \bar{\Delta}^{(p)}_q
$$
with one of the reality conditions (see equations \eqref{brs}) to obtain
$$
\{ \zeta^{(k)}_l, \phi^{(p)}_q\}_\infty = \frac{1}{4} \, \left( \ep(k-p)  (C-R) - (A-B) \right).
$$
Note that $A$ and $B$ for the pair of minors $\Delta^{(k)}_l$ and $ \bar{\Delta}^{(p)}_q $
coincide with $C$ and $R$ for the pair of minors $\Delta^{(k)}_l$ and $ \Delta^{(p)}_q $.
Hence, the expression for the Poisson bracket simplifies as follows
$$
\{ \zeta^{(k)}_l, \phi^{(p)}_q\}_\infty = \frac{1}{4} \, ( \ep(k-p) -1) \, (C-R).
$$
Now it is obvious that the bracket vanishes for $k>p$ since in this case $\ep(k-p)=1$.
If $k \leq p$ and $k-l \geq p-q$, the submatrix corresponding to $\Delta^{(k)}_l$ is contained
in the submatrix corresponding to $\Delta^{(p)}_q$. Then, $C=R=l$ and the
Poisson bracket vanishes, as required.

Finally, for $p=k+1$ and $q=l$ we have $C=l$ while $R=l-1$ which yields
$$
\{ \zeta^{(k)}_l, \phi^{(k+1)}_l \}_\infty = -\frac{1}{2}.
$$
\end{proof}

The  following propositions are easy consequences of Theorem \ref{thmcan}.

\begin{prop}
Functions $\zeta^{(n)}_l$ for $l=1, \dots, n$ are Casimir functions for the bracket
$\{ \cdot, \cdot\}_\infty$.
\end{prop}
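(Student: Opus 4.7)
The plan is to verify that $\zeta^{(n)}_l$ Poisson-commutes with every coordinate function of the tropicalized system on $\C(\pi_{G^*};\Delta,\bar{\Delta})\times \T^{n(n-1)/2}$. The coordinates split into two families: the action variables $\zeta^{(p)}_q$ (for $n\geq p\geq q\geq 1$) and the angle variables $\phi^{(p)}_q$ (for $n\geq p>q\geq 1$, since $\phi^{(k)}_k$ is identically zero). It suffices to check the Poisson bracket of $\zeta^{(n)}_l$ against each family.

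For the brackets among action variables, I would appeal to the general structure of the linear scaling limit. Since the log-canonical part of $\pi_{G^*}$ in the coordinates $\Delta,\bar{\Delta}$ coincides with $\pi_{G^*_0}$ and satisfies the reality conditions \eqref{reality}, the limit $\{\cdot,\cdot\}_\infty$ takes the form \eqref{brs}. In particular $\{\zeta_a,\zeta_b\}_\infty=0$ universally, which immediately gives $\{\zeta^{(n)}_l,\zeta^{(p)}_q\}_\infty=0$.

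For the mixed brackets with angle variables, I would invoke Theorem \ref{thmcan} directly. That theorem guarantees $\{\zeta^{(k)}_l,\phi^{(p)}_q\}_\infty=0$ whenever $k\geq p$. Setting $k=n$, the inequality $n\geq p$ is automatic for every admissible $p\in\{1,\dots,n\}$, so $\{\zeta^{(n)}_l,\phi^{(p)}_q\}_\infty=0$ for all $(p,q)$. Combining the two families yields the Casimir property.

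There is essentially no substantive obstacle here: the proposition is a direct corollary of Theorem \ref{thmcan} together with the structural vanishing encoded in \eqref{brs}. The only point worth noting is the borderline case $k=p=n$, which is covered because the row and column sets of $\Delta^{(n)}_l$ and $\Delta^{(n)}_q$ are nested, forcing $C=R$ in the formula $\{\zeta^{(k)}_l,\phi^{(p)}_q\}_\infty=\frac{1}{4}(\ep(k-p)-1)(C-R)$ derived in the proof of Theorem \ref{thmcan}.
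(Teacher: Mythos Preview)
Your proof is correct and follows essentially the same approach as the paper: both invoke Theorem~\ref{thmcan} with $k=n$ so that the condition $k\geq p$ is automatically satisfied, forcing all brackets $\{\zeta^{(n)}_l,\phi^{(p)}_q\}_\infty$ to vanish. You are slightly more thorough than the paper in explicitly noting that the action--action brackets $\{\zeta,\zeta\}_\infty$ vanish by the general structure~\eqref{brs}, and in commenting on the borderline $k=p=n$ case, but these are minor elaborations rather than a different route.
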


\begin{proof}
This statement is obvious since for $k=n$ the condition $k \geq p$ is always verified,
and we have
$$
\{ \zeta^{(n)}_l, \phi^{(p)}_q\}_\infty = 0 
$$
for all values of $p$ and $q$.
\end{proof}

\begin{prop}
Symplectic leaves of the Poisson bracket $\{ \cdot, \cdot\}_\infty$ are 
hyperplanes of constant $\zeta^{(n)}_l$ for $l=1, \dots, n$. The Liouville
form on symplectic leaves is given by:
$$
\mathcal{L} = \prod_{k=1}^{n-1} \prod_{l=1}^k \,\, \left( 2 d \phi^{(k+1)}_l\, \wedge d \zeta^{(k)}_l \right) .
$$
\end{prop}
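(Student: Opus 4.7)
The plan is to combine the preceding proposition (which identifies $\zeta^{(n)}_l$ as Casimirs) with Theorem \ref{thmcan} and the reality conditions \eqref{reality} to write $\pi_\infty$ explicitly as a direct sum of canonical $2\times 2$ Poisson blocks on the common level sets of the Casimirs.

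First I would assemble the full Poisson tensor on the ambient manifold $\C(\pi_{G^*};\Delta,\bar\Delta)\times \mathbb{T}^{n(n-1)/2}$. Since $\pi_{G^*}$ in coordinates $(\Delta,\bar\Delta)$ satisfies \eqref{reality}, formulas \eqref{brs} force $\{\zeta^{(k)}_l,\zeta^{(p)}_q\}_\infty=0$ and $\{\phi^{(k)}_l,\phi^{(p)}_q\}_\infty=0$, so only the mixed brackets can be non-zero. Theorem \ref{thmcan} then asserts that $\{\zeta^{(k)}_l,\phi^{(p)}_q\}_\infty=0$ unless $(p,q)=(k+1,l)$, where the value is $-1/2$. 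On a common level set $L_c=\{\zeta^{(n)}_l=c_l,\ l=1,\dots,n\}$ of dimension $n^2-n=n(n-1)$, the remaining non-Casimir $\zeta$-variables are indexed by $1\le l\le k\le n-1$ (there are $n(n-1)/2$ of them), and the $\phi$-variables are indexed by $1\le q<p\le n$ (also $n(n-1)/2$). The assignment $(k,l)\leftrightarrow(k+1,l)=(p,q)$ is a bijection between these two index sets, so the Poisson matrix on $L_c$ is block diagonal with $n(n-1)/2$ non-degenerate $2\times 2$ blocks. Non-degeneracy forces $L_c$ itself to be a symplectic leaf, proving the first claim.

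Inverting the bivector block-by-block produces the symplectic form
$$
\omega = \sum_{k=1}^{n-1}\sum_{l=1}^{k} 2\, d\phi^{(k+1)}_l\wedge d\zeta^{(k)}_l
$$
on $L_c$. The Liouville form is $\omega^{m}/m!$ with $m=n(n-1)/2$, and since the $m$ summands of $\omega$ are supported on pairwise disjoint pairs of coordinates, every cross term in the multinomial expansion contains a repeated $2$-form and hence vanishes; only the product of the distinct summands survives, giving precisely the claimed $\mathcal{L}$. No substantive obstacle arises beyond bookkeeping; the one point requiring a little care is the orientation/sign convention used when inverting each $2\times 2$ block to identify $(\zeta^{(k)}_l,\phi^{(k+1)}_l)$ as a Darboux pair.
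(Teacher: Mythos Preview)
Your reading of Theorem \ref{thmcan} is incorrect, and the error propagates through the rest of the argument. Theorem \ref{thmcan} does \emph{not} say that $\{\zeta^{(k)}_l,\phi^{(p)}_q\}_\infty$ vanishes unless $(p,q)=(k+1,l)$; it only gives vanishing when $k\ge p$ or when $k<p$ and $k-l\ge p-q$, together with the single value $\{\zeta^{(k)}_l,\phi^{(k+1)}_l\}_\infty=-\tfrac12$. Many other brackets are nonzero. For instance, with $n=3$ one computes from the formula in the proof of Theorem~\ref{thmcan} that
\[
\{\zeta^{(1)}_1,\phi^{(3)}_1\}_\infty=\{\zeta^{(2)}_2,\phi^{(3)}_1\}_\infty=\{\zeta^{(1)}_1,\phi^{(3)}_2\}_\infty=-\tfrac12,
\]
so the Poisson matrix on a leaf is not block diagonal and your expression for the symplectic form $\omega$ is false.

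What Theorem \ref{thmcan} actually gives, after choosing the ordering the paper introduces (higher $k$ first, then smaller $l$), is that the matrix $A_{MN}=\{\zeta_M,\phi_N\}_\infty$ is \emph{lower triangular} with $-\tfrac12$ on the diagonal. This is enough for the first claim (the matrix is invertible, so the level sets of the Casimirs $\zeta^{(n)}_l$ are the symplectic leaves), and it is also enough for the Liouville form: the matrix of $\omega$ is the inverse transpose of $A$, hence again lower triangular with $-2$ on the diagonal, and the top exterior power of a form with triangular coefficient matrix sees only the diagonal entries. That last step replaces your ``disjoint support'' argument, which is unavailable here because the summands of $\omega$ do \emph{not} have pairwise disjoint supports.
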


\begin{proof}
Note that the number of variables $\zeta^{(k)}_l$ with $k=1, \dots, n-1$ and $l=1, \dots, k$
is exactly equal to the number of variables $\phi^{(p)}_q$ with $p=2, \dots, n$ and $q=1, \dots, p-1$.
Let us order the variables $\zeta$ and $\phi$ in such a way that the variables with higher $k$ come first,
and among variables with equal $k$ the ones with smaller $l$ come first. For instance, for $n=3$
we get the following order on $\zeta$'s: $\zeta^{(2)}_1, \zeta^{(2)}_2, \zeta^{(1)}_1$, and the order
on $\phi$'s: $\phi^{(3)}_1, \phi^{(3)}_2, \phi^{(2)}_1$. Now to every $\zeta$ (and to every $\phi$) 
we can associate its number in the order of $\zeta$'s (respectively, in the order of $\phi$'s).

With this order, the Poisson bracket
$\{ \zeta^{(k)}_l, \phi^{(p)}_q\}_\infty$  is given by a lower triangular form since 
$$
\{ \zeta_M, \phi_N\}_\infty =0 
$$
if $M < N$. The diagonal entries are non-vanishing and equal to
$$
\{ \zeta_N, \phi_N\}_\infty = - \frac{1}{2}.
$$
Hence, the tangent vectors $\partial/\partial \zeta^{(k)}_l$
and $\partial/\partial \phi^{(k+1)}_l$ for $k=1, \dots, n-1$ and $l=1, \dots, k$ span the sympletic leaf. The matrix of the symplectic form is the inverse of the transposed matrix of Poisson brackets. It is also lower triangular with $(-2)$'s as diagonal entries which implies the formula for the Liouville form.
\end{proof}

\begin{thm}
The Poisson manifold $\C(\pi_{G^*}; \Delta, \bar{\Delta}) \times \mathbb{T}^{n(n-1)/2}$ equipped
with the Poisson bracket $\{ \cdot, \cdot\}_\infty$ is isomorphic to the Gelfand-Zeitlin completely integrable system.
\end{thm}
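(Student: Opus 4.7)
The plan is to consolidate the preceding results of this section into the claimed isomorphism of integrable systems; the ingredients are already in place. First, the cone isomorphism theorem gives $\sigma \colon \C_{GZ} \to \C(\pi_{G^*}; \Delta, \bar\Delta)$, $\lambda^{(k)}_l \mapsto \zeta^{(k)}_l$. Combined with the Casimir proposition, which identifies $\zeta^{(n)}_1, \dots, \zeta^{(n)}_n$ as Casimirs of $\{\cdot,\cdot\}_\infty$, this matches the symplectic foliation of $\C(\pi_{G^*}; \Delta, \bar\Delta) \times \mathbb{T}^{n(n-1)/2}$ with the coadjoint-orbit foliation of $\mathfrak{u}(n)^*$: both families of leaves are parametrised by the top spectrum $\lambda^{(n)}_1, \dots, \lambda^{(n)}_n$.

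Next, I would verify that the action variables agree. On each symplectic leaf the remaining $\zeta^{(k)}_l$ with $k < n$ Poisson-commute by Theorem \ref{thmcan}, and there are $n(n-1)/2$ of them, exactly the half-dimension of the leaf. Via $\sigma$ they are triangular integer-linear combinations of the classical Gelfand-Zeitlin action variables $\lambda^{(k)}_l$, so both action maps have the same image and the same fibres. Conjugate angles on the tropical side come from the Liouville form proposition: the pairing $\sum 2\, d\phi^{(k+1)}_l \wedge d\zeta^{(k)}_l$ together with the lower-triangular Poisson matrix of Theorem \ref{thmcan} allows one to invert explicitly and express canonical coordinates $\theta^{(k)}_l$ conjugate to the $\zeta^{(k)}_l$ as linear combinations of the $\phi^{(p)}_q$.

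The main obstacle will be the period-lattice matching on the Liouville tori $\mathbb{T}^{n(n-1)/2}$. The tropical angles $\phi^{(k)}_l = \mathrm{Arg}(\Delta^{(k)}_l)$ are intrinsically $2\pi$-periodic, while the Gelfand-Zeitlin angles conjugate to the $\lambda^{(k)}_l$ are $2\pi$-periodic flow parameters of the associated Hamiltonian vector fields on the coadjoint orbit (Guillemin-Sternberg). One therefore has to show that the linear change of angle coordinates induced by inverting the matrix of Theorem \ref{thmcan} is unimodular, so that the two $(2\pi\mathbb{Z})^{n(n-1)/2}$ lattices are identified. Because the Poisson matrix is lower triangular with equal diagonal $-\tfrac12$, this reduces to a purely combinatorial check on the half-integer off-diagonal entries produced by Theorem \ref{thmcan}. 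Combining this lattice match with the action-map identification by $\sigma$ yields the claimed Poisson isomorphism intertwining action variables and Liouville tori, which is the desired isomorphism of completely integrable systems.
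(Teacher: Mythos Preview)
Your approach is essentially the paper's, phrased more abstractly. The paper simply writes down the candidate isomorphism
\[
\zeta^{(k)}_l = \tfrac12\bigl(\lambda^{(k)}_1+\cdots+\lambda^{(k)}_l\bigr),\qquad
\phi^{(k)}_l = \psi^{(k-1)}_l + (\text{linear combination of $\psi$'s later in the order}),
\]
and then uses the lower-triangular structure of Theorem~\ref{thmcan} to argue that there is a unique choice of the higher terms making the map Poisson. Your ``invert the Poisson matrix to get canonical angles $\theta^{(k)}_l$'' is precisely the procedure by which one would discover these coefficients; the two arguments are the same computation viewed from opposite ends.

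Where you are actually more careful than the paper is the period-lattice question. The paper asserts that the resulting map on angles is a torus automorphism without checking that the triangular coefficients are integers. Your instinct that this reduces to a combinatorial check is correct: since every bracket $\{\zeta_M,\phi_N\}_\infty=-\tfrac12(C-R)$ lies in $\tfrac12\mathbb{Z}$, and since $\{\zeta_M,\psi_b\}\in\{0,-\tfrac12\}$ with $\{\zeta_M,\psi_M\}=-\tfrac12$, solving recursively in $M-N$ shows each coefficient is an integer. Note also that the paper silently inserts the factor $\tfrac12$ into $\sigma$ here, relative to the earlier cone-isomorphism theorem; this rescaling is harmless on the cone but is exactly what makes the diagonal brackets come out as $-\tfrac12$ and the integrality work.
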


\begin{proof}
Recall that the Gelfand-Zeitlin integrable system is described by the Poisson manifold
$\C_{GZ} \times \mathbb{T}^{n(n-1)/2}$. Coordinates on $\C_{GZ}$, $\lambda^{(k)}_l,
k=1, \dots, n, l=1, \dots, k$ satisfy the interlacing inequalities and can be interpreted as action variables of the integrable system. Coordinates $\psi^{(k)}_l, k=1, \dots, n-1, l=1, \dots, k$ on $\mathbb{T}^{n(n-1)/2}$ become angle variables. The Poisson bracket is given by
$$
\{ \lambda^{(k)}_l, \psi^{(x)}_y\} = - \delta_{k,x} \delta_{l,y}.
$$

We claim that there is a unique Poisson isomorphism 
$$
\C_{GZ} \times \mathbb{T}^{n(n-1)/2} \to \C(\pi_{G^*}; \Delta, \bar{\Delta}) \times \mathbb{T}^{n(n-1)/2}
$$
such that
\be \label{eq:lambda_zeta}
\zeta^{(k)}_l = \frac{1}{2} \left( \lambda^{(k)}_1 + \dots + \lambda^{(k)}_l \right).
\ee
for $k=1, \dots, n$ and  $l=1, \dots, k$, and 
\be \label{eq:phi_psi}
\phi^{(k)}_l = \psi^{(k-1)}_l + { linear \,\, combination \,\, of \,\, } \psi'{ s\,\, higher \,\, in \,\, the \,\, order}
\ee
for $k=2, \dots, n$ and $ l=1, \dots, k-1$.

Indeed, the map \eqref{eq:lambda_zeta} is a bijection between $\C_{GZ}$ and $\C(\pi_{G^*}; \Delta, \bar{\Delta})$, and the map \eqref{eq:phi_psi} is an automorphism of the torus $\mathbb{T}^{n(n-1)/2}$.
For the Poisson brackets, we have
$$
\{ \zeta_M, \phi_N \} = 0
$$
for $M<N$ since $\{ \lambda_a, \psi_b\} =0$ for $a<b$ and $\zeta_M$ is a linear
combination of $\lambda_a$'s with $a \leq M$ and $\phi_N$ is a linear combination of 
$\psi$'s with $b \geq N$. 

Next, we obtain
$$
\{ \zeta_N, \phi_N\} =- \frac{1}{2}
$$
because $\zeta_N$ is a sum of $\lambda_N/2$ and a linear combination of $\lambda_a$'s
with $a < N$, and $\psi_N$ is a sum of $\psi_N$ and a linear combination of $\psi_b$'s 
with $b >N$.

Finally, there is a unique choice of linear combinations in \eqref{eq:phi_psi} such that
the constants $\{ \zeta_M, \phi_N\}$ for $M>N$ are set to the values given by $\{ \cdot, \cdot\}_\infty$.

\end{proof}

\newpage
\appendix 
\section{ $r$-matrix Poisson brackets} \label{appA}

Let $[n]=\{ 1, \dots, n\}$, $I= \{ i_1, i_2, \dots, i_c\} \subset [n]$ and $J= \{ j_1, j_2, \dots, j_c\} \subset [n]$. We shall use the following notation:
\begin{itemize}
 \item  $M_{IJ}$ denotes the minor of a matrix $M\in {\rm Mat}_n$ with rows labeled by elements
 of $I$ and columns labeled by elements of $J$;
 \item $\chi_I$ is the characteristic function of $I$, so that $\chi_I(k) = 1$ if $k \in I$
and $\chi_I(k)=0$ otherwise;
\item  for $k\in I$,  $\sigma_{k,l}(I)$ is the set  obtained from $I$ after replacing $k$ by $l$.

\end{itemize}

\iffalse
For a matrix $M\in {\rm Mat}_n$ we shall denote by $M_{IJ}$ the minor of $M$, where
$I=\{ i_1,  i_2,  \dots, i_c\}, J=\{ j_1, j_2, \dots, j_c\}$ are two subsets of $[n]=\{ 1, \dots, n\}$ of equal cardinality $c=| I | = |J|$. 

Let $I= \{ i_1, i_2, \dots, i_c\} \subset [n]$ and $k \in [n]$.
Denote by $\chi_I$ the characteristic function of $I$ so as $\chi_I(k) = 1$ if $k \in I$
and $\chi_I(k)=0$ otherwise. For $k\in I$ and $l \notin I$, denote $\sigma_{k,l}(I)$
the subset of $[n]$ with $k$ replaced by $l$.
\fi

\begin{prop} \label{propbree}
Let $i, j \in [n]$ and let 
\be  
\{ L^1, M^2\} = r'\,L^1 M^2
\ee
be a skew-symmetric bracket on ${\rm Mat}_n \times {\rm Mat}_n$. Then
\be \label{bracketeer'}
\{ L_{IJ}, M_{ST} \} =\sum_{u<v} \chi_I(u) \chi_S(v) \, L_{\sigma_{u,v}(I), J} M_{\sigma_{v,u}(S), T}
\ee
\end{prop}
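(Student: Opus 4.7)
The plan is a two-step reduction: first, decode the Saint--Petersburg notation to extract the Poisson bracket of individual matrix entries; second, lift this formula to minors via the Leibniz rule and the Laplace expansion.

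For the first step, expanding $r' L^1 M^2 = \sum_{i<j} (e_{i,j} L) \otimes (e_{j,i} M)$ and reading off the $(a,b;c,d)$-component via $(e_{i,j} L)_{a,b} = \delta_{a,i} L_{j,b}$ and $(e_{j,i} M)_{c,d} = \delta_{c,j} M_{i,d}$ one obtains
\begin{equation*}
\{L_{a,b}, M_{c,d}\} = \begin{cases} L_{c,b}\, M_{a,d}, & a < c, \\ 0, & \text{otherwise.} \end{cases}
\end{equation*}

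For the second step, expand $L_{IJ}$ by Laplace along the row $u \in I$ (at sorted position $r$) and $M_{ST}$ by Laplace along the row $v \in S$ (at sorted position $r'$), and apply the derivation property of the bracket. By the first step, only pairs $(u, v)$ with $u < v$ contribute, and for such a pair the entry bracket $\{L_{u, j_s}, M_{v, t_b}\}$ equals $L_{v, j_s}\, M_{u, t_b}$. The resulting double sum over $(s, b)$ factors as
\begin{equation*}
\Bigl(\sum_s (-1)^{r+s} L_{v, j_s}\, L_{I\setminus u,\, J\setminus j_s}\Bigr) \Bigl(\sum_b (-1)^{r'+b} M_{u, t_b}\, M_{S\setminus v,\, T\setminus t_b}\Bigr).
\end{equation*}
Each factor is itself a Laplace expansion: the first is the determinant of the submatrix with row labels $(i_1, \ldots, i_{r-1}, v, i_{r+1}, \ldots, i_c)$, namely the minor $L_{\sigma_{u,v}(I),\, J}$ under the position-preserving interpretation of $\sigma_{u,v}$, and the analogous assertion holds for $M_{\sigma_{v,u}(S),\, T}$. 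Summing over admissible pairs yields the claimed formula \eqref{bracketeer'}.

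The main point of care is sign bookkeeping in the second step: the Laplace signs $(-1)^{r+s}$ and $(-1)^{r'+b}$ are absorbed into the two determinants precisely when $L_{\sigma_{u,v}(I), J}$ and $M_{\sigma_{v,u}(S), T}$ are read as determinants of submatrices whose rows are listed in the order inherited from the substitutions $u \mapsto v$ and $v \mapsto u$, respectively, rather than in increasing order. Under this convention no extra signs survive; a straightforward verification on low-dimensional examples pins down the convention and confirms consistency with the companion formula used in the proof of Theorem~\ref{thm:DeltaPoisson}.
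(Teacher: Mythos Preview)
Your proposal is correct and follows essentially the same route as the paper: compute the bracket of matrix entries from $r'L^1M^2$, then lift to minors using the derivation property (which you phrase via Laplace expansion and cofactors, while the paper says more tersely ``minors are linear in their rows and the Poisson bracket is a derivation''). The only organizational difference is that the paper first treats a single term $e_{uv}\otimes e_{vu}$ and then sums, whereas you carry the sum over $u<v$ from the start; your extra care with the Laplace signs and the position-preserving convention for $\sigma_{u,v}(I)$ makes explicit a point the paper leaves implicit.
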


\begin{rem}
Note that if $v \in I$ the minor $L_{\sigma_{u,v}(I), J}$ vanishes since it contains
two identical rows. The same applies to the case of $u \in S$.
\end{rem}

\begin{proof}

Let us first consider a bracket 

\be  \label{bracketee}
\{ L^1, M^2\} = (e_{uv} \otimes e_{vu}) L^1 M^2
\ee

\no and prove that for such a bracket

\be \label{bracketeerez}
\{ L_{IJ}, M_{ST} \} =\chi_I(u) \chi_S(v) \, L_{\sigma_{u,v}(I), J} M_{\sigma_{v,u}(S), T}.
\ee

\no Taking matrix elements $(i,j)$ in the first space and $(s,t)$ in the second space in the
formula \eqref{bracketee} we get
\be \label{bracketone}
\{ L_{ij}, M_{st} \} = \delta_{iu} \delta_{sv} L_{vj} M_{ut}
= \chi_{ \{ i\} }(u)  \chi_{ \{ s\} }(v) \, L_{\sigma_{u,v}( i), j} M_{\sigma_{v,u}(s), t}.
\ee

\no This is exactly the equation \eqref{bracketeerez} where the
sets $I,J,S,T$ consist of one element each.
The minors are linear in their rows and the Poisson bracket is a
derivation on each factor. Hence, we obtain equation \eqref{bracketeerez}
in the general case
by applying equation \eqref{bracketone} to each pair or rows
of matrices $M$ and $L$ and summing up the results.

Since $r'= \sum_{u<v} e_{uv} \otimes e_{vu} $, equation 
\eqref{bracketeer'} directly follows from  equation (\ref{bracketeerez}) by  taking the sum:

$$
\{ L^1, M^2 \} =\sum_{u<v} e_{uv} \otimes e_{vu} \, L^1 M^2=\sum_{u<v}\chi_I(u) \chi_S(v) \, L_{\sigma_{u,v}(I), J} M_{\sigma_{v,u}(S), T}
$$

\end{proof}

\no A similar argument shows that for the bracket $\{ L^1, M^2\} = L^1 M^2 r'$ one obtains
\be \label{prrez1}
\{ L_{IJ}, M_{ST} \} = \sum_{u<v}\chi_J(v) \chi_T(u)L_{I,\sigma_{v,u}(J)} M_{S,\sigma_{u,v}(T)},
\ee
\no And for the bracket $\{ L^1, M^2\} = L^1 r' M^2$ one gets
\be \label{prrez2}
\{ L_{IJ}, M_{ST} \} = \sum_{u<v}\chi_J(v) \chi_S(v) \, L_{I,\sigma_{v,u}(J)} M_{\sigma_{v,u}(S),T}
\ee

\begin{prop} \label{propbree2}
For the skew-symmetric bracket 
\be \label{bracketr0}
\{ L^1, M^2 \} = r_0 L^1 M^2
\ee
on ${\rm Mat}_n \times {\rm Mat}_n$, we have
\be \label{minorsr0}
\{ L_{IJ}, M_{ST} \} = \frac{1}{2} \, | I \cap S| \, L_{IJ}  M_{ST}.
\ee
\end{prop}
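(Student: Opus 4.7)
The plan is to follow the same strategy as in Proposition \ref{propbree}: first compute the bracket on individual matrix entries, then extend to arbitrary minors using multilinearity of the determinant in rows together with the Leibniz rule for the Poisson bracket.

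Starting from $r_0 = \tfrac{1}{2}\sum_a e_{a,a}\otimes e_{a,a}$, I would extract the $(i,j)$-entry in the first tensor factor and the $(s,t)$-entry in the second factor of \eqref{bracketr0}. Since $(e_{a,a} L)_{ij} = \delta_{ia} L_{ij}$ and $(e_{a,a} M)_{st} = \delta_{sa} M_{st}$, this yields
$$\{L_{ij}, M_{st}\} \;=\; \tfrac{1}{2}\sum_a \delta_{ia}\delta_{sa}\, L_{ij} M_{st} \;=\; \tfrac{1}{2}\,\delta_{is}\, L_{ij} M_{st}.$$
In contrast with the case of $r'$ treated in Proposition \ref{propbree}, this entry-level bracket is already log-canonical: no row or column substitution is introduced, and no summation constraint $u<v$ is needed.

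To pass to minors, I would expand $L_{IJ}$ and $M_{ST}$ by the Leibniz formula and apply the derivation property to each pair of factors $L_{i_k, j_{\sigma(k)}}$ and $M_{s_l, t_{\tau(l)}}$. Each such pair contributes the scalar $\tfrac{1}{2}\delta_{i_k s_l}$ times the original product; since this scalar is independent of the column indices and of the permutations $\sigma, \tau$, it factors out and the residual sum restores the full determinants $L_{IJ} M_{ST}$ with overall coefficient $\tfrac{1}{2}\sum_{k,l}\delta_{i_k s_l} = \tfrac{1}{2}|I \cap S|$, giving \eqref{minorsr0}. There is essentially no obstacle here; the argument reduces to a clean count of row coincidences between $I$ and $S$, which is markedly simpler than the $r'$-case of Proposition \ref{propbree}, where one must separately check that minors with repeated rows vanish and track the range $u<v$.
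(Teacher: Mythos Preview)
Your proof is correct and follows essentially the same route as the paper: compute the bracket on matrix entries from $r_0=\tfrac{1}{2}\sum_k e_{kk}\otimes e_{kk}$, then pass to minors via multilinearity and the Leibniz rule to obtain the coefficient $\tfrac{1}{2}\sum_k \chi_I(k)\chi_S(k)=\tfrac{1}{2}|I\cap S|$. The paper simply records this in a single compressed line, but the underlying argument is identical to yours.
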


%\begin{rem}
%By definition, the bracket \eqref{bracketr0} is skew-symmetric. Since it is log-canonical
%in terms of matrix elements of the matrices $L$ and $M$, it is also Poisson.
%\end{rem}

\begin{proof}

Recall that $r_0= \frac{1}{2} \, \sum_k e_{kk} \otimes e_{kk} $. The proof is similar to that of Proposition \ref{propbree}. We compute,
$$
\{ L_{IJ}, M_{ST} \} = \frac{1}{2} \, \sum_k \, \chi_I(k) \chi_S(k) L_{IJ} M_{ST}
=\frac{1}{2} \, | I \cap S| \, L_{IJ}  M_{ST}.
$$

\end{proof}

\no Similarly, for the bracket $\{ L^1, M^2\} = L^1 M^2 r_0$ we have
\be \label{prrez3}
\{L_{IJ}, M_{ST} \} = \frac{1}{2} \, | J \cap T | \, L_{IJ} M_{ST},
\ee
\no and for the bracket $\{ L^1, M^2\} = L^1 r_0 M^2$
\be \label{prrez4}
\{L_{IJ}, M_{ST} \} = \frac{1}{2} \, | J \cap S | \, L_{IJ} M_{ST}.
\ee

\begin{thm}
 Let $i, j \in [n]$ and let 
\be % \label{bracketee}
\{ L^1, M^2\} = [r\,, L^1 M^2]
\ee
be a skew-symmetric bracket on ${\rm Mat}_n \times {\rm Mat}_n$. Then,

\begin{multline}    \label{bracketdetailed}
\{ L_{IJ}, M_{ST} \}
=\sum_{u<v}\chi_I(u) \chi_S(v) \, L_{\sigma_{u,v}(I), J} M_{\sigma_{v,u}(S), T}-\\
-\sum_{u<v}\chi_J(v) \chi_T(u) \, L_{I,\sigma_{v,u}(J)} M_{S,\sigma_{u,v}(T)}
 + \frac{1}{2} \, (| I \cap S| -| J \cap T |)\, L_{IJ}  M_{ST} 
\end{multline}

\end{thm}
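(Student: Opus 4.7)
The plan is to decompose the bracket $\{L^1,M^2\} = [r,L^1M^2]$ into the four constituent pieces that correspond to the splitting $r = r_0 + r'$ and to the two orderings in the commutator, and then invoke the earlier propositions termwise.

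More precisely, I would first expand
\begin{equation*}
[r, L^1 M^2] = r_0 L^1 M^2 + r' L^1 M^2 - L^1 M^2 r_0 - L^1 M^2 r',
\end{equation*}
so that $\{L_{IJ}, M_{ST}\}$ is the sum of the matrix elements of these four products (taking index $(i,j)$ in the first tensor slot and $(s,t)$ in the second, then extending to minors by multilinearity exactly as in the proof of Proposition \ref{propbree}). The contribution of $r' L^1 M^2$ is computed by formula \eqref{bracketeer'}, giving the first sum in \eqref{bracketdetailed}. The contribution of $-L^1 M^2 r'$ is computed by formula \eqref{prrez1} and produces the second sum (with the minus sign). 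The two log-canonical corrections come from the $r_0$ pieces: by Proposition \ref{propbree2} the term $r_0 L^1 M^2$ gives $\tfrac{1}{2}|I\cap S|\,L_{IJ}M_{ST}$, and by \eqref{prrez3} the term $-L^1 M^2 r_0$ gives $-\tfrac{1}{2}|J\cap T|\,L_{IJ}M_{ST}$. Summing these four contributions yields exactly \eqref{bracketdetailed}.

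There is really no serious obstacle: the statement is a bookkeeping assembly of four results that have already been established, together with the decomposition $r=r_0+r'$. The only point that requires a moment's care is matching the four summands to the correct earlier formulas and keeping track of signs, which is straightforward since the commutator structure dictates a plus sign for $r L^1 M^2$ and a minus sign for $L^1 M^2 r$. No new computation at the level of individual matrix entries is needed, since Propositions \ref{propbree} and \ref{propbree2} (and formulas \eqref{prrez1} and \eqref{prrez3}) already handle the passage from matrix entries to arbitrary minors via multilinearity of determinants and the Leibniz rule for the Poisson bracket.
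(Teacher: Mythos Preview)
Your proposal is correct and follows exactly the paper's approach: the paper's proof is a one-line citation of Propositions~\ref{propbree} and~\ref{propbree2} together with equations~\eqref{prrez1} and~\eqref{prrez3}, and you have spelled out precisely how those four pieces assemble under the decomposition $[r,L^1M^2] = r_0L^1M^2 + r'L^1M^2 - L^1M^2r_0 - L^1M^2r'$.
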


\begin{proof}
 The theorem directly follows from Propositions \ref{propbree} and \ref{propbree2} and equations
 (\ref{prrez1}) and (\ref{prrez3}) .
\end{proof}

\begin{thm}
 Let $i, j \in [n]$ and let 
\be  %\label{bracketee}
\{ L^1, M^2\} = L^1\,r\,M^2-M^2\,r\,L^1
\ee
be a skew-symmetric bracket on ${\rm Mat}_n \times {\rm Mat}_n$. Then,

\begin{multline}    \label{grf-frg}
\{ L_{IJ}, M_{ST} \}
=\sum_{u<v}\chi_J(v) \chi_S(v) \, L_{I,\sigma_{v,u}(J)} M_{\sigma_{v,u}(S), T}-\\
-\sum_{u<v}\chi_I(u) \chi_T(u) \, L_{\sigma_{u,v}(I),J} M_{S,\sigma_{u,v}(T)}
 + \frac{1}{2} \, (| J \cap S| -| I \cap T |)\, L_{IJ}  M_{ST} 
\end{multline}

\end{thm}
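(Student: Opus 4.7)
The plan is to decompose $r = r_0 + r'$ and expand
\begin{equation*}
\{L^1, M^2\} = L^1 r_0 M^2 + L^1 r' M^2 - M^2 r_0 L^1 - M^2 r' L^1,
\end{equation*}
treating each of the four pieces separately. The first two are immediate from results already established: the contribution of $L^1 r' M^2$ is given by \eqref{prrez2} and reproduces the first sum in \eqref{grf-frg}, while the contribution of $L^1 r_0 M^2$ from \eqref{prrez4} yields the $+\frac{1}{2}|J\cap S|\, L_{IJ} M_{ST}$ portion of the diagonal term.

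For the two remaining pieces $-M^2 r_0 L^1$ and $-M^2 r' L^1$, the strategy is the one used in Propositions \ref{propbree} and \ref{propbree2}: first establish the formula on single matrix entries by reading off the $(i,j;s,t)$ component of the explicit tensor expression, then extend to arbitrary minors by multilinearity in rows and the Leibniz rule, invoking as in Proposition \ref{propbree} the vanishing of determinants with a repeated row. Expanding $M^2 r_0 L^1 = \frac{1}{2}\sum_k (e_{kk}L)\otimes (Me_{kk})$ yields $\frac{1}{2}\delta_{it}\, L_{ij} M_{st}$ on single entries and hence $\frac{1}{2}|I\cap T|\, L_{IJ} M_{ST}$ on minors. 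Expanding $M^2 r' L^1 = \sum_{u<v} (e_{uv}L)\otimes (Me_{vu})$ gives, on single entries, $\delta_{it}\sum_{v>i} L_{vj} M_{sv}$, which promotes to $\sum_{u<v}\chi_I(u)\chi_T(u)\, L_{\sigma_{u,v}(I),J} M_{S,\sigma_{u,v}(T)}$ for minors.

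Combining the four contributions with the signs imposed by $\{L^1,M^2\} = L^1 r M^2 - M^2 r L^1$ produces exactly \eqref{grf-frg}. The main thing to watch, as opposed to routine calculation, is the asymmetry between the two $r'$ pieces: the first brings replacements $\sigma_{v,u}$ acting on the column set $J$ and the row set $S$, while the second brings replacements $\sigma_{u,v}$ acting on the row set $I$ and the column set $T$. Keeping these replacement patterns and the relative sign straight is the only real obstacle; no substantive new ideas beyond those already used in Appendix \ref{appA} are required.
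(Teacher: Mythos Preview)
Your proof is correct and follows essentially the same approach as the paper, which simply states that the result follows from equations \eqref{prrez2} and \eqref{prrez4}. You have merely made explicit the computation of the $-M^2 r L^1$ contributions that the paper leaves to the reader by analogy.
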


\begin{proof}
 The theorem follows from equations (\ref{prrez2}) and (\ref{prrez4}) .
\end{proof}

\section{Planar networks}  \label{appB}

A planar network $\Gamma$ of type $n$ is a finite planar oriented graph which satisfies
the following conditions:

\begin{itemize}

\item
It is contained between two straight vertical lines $L$ and $R$.

\item
Its edges are segments of straight lines, and their horizontal projections are non-vanishing. All the edges are oriented in such a way that their horizontal
projections are positive.

\item
It has exactly $n$ sources on $L$ and exactly $n$ sinks on $R$, the number $n$ is called the type of a planar network.
\end{itemize}

\no Let $V\Gamma$ be the set of vertices of $\Gamma$ and $E\Gamma$ - the set of edges. A weighting of a planar network is a map $w: E\Gamma \to \CC$. One can associate a matrix to a planar network $\Gamma$ with weighting $w$ in a following way:
$$
M(\Gamma, w)_{ij} = \sum_{\gamma \in P\Gamma_{ij}} \, \prod_{e \in \gamma} \, w(e),
$$
where $P\Gamma_{ij}$ is the set of paths in $\Gamma$ starting in the source with number $i$
and ending in the sink with number $j$, $e\in\gamma$ are the edges of the path $\gamma$. The Lindstr\"om Lemma gives a beautiful formula
for minors of the matrix $M(\Gamma, w)$ in terms of weights $w$ \cite{FZ}:
$$
M(\Gamma, w)_{IJ} = \sum_{\gamma \in P\Gamma_{IJ}} \, \prod_{e \in \gamma} \, w(e).
$$
Here $I=\{ i_1 < i_2 < \dots < i_k\} $ and $J=\{ j_1 < j_2 <\dots < j_k\} $ are multi-indices of cardinality  $k=| I | = |J |$, $P\Gamma_{IJ}$ is the set of $k$--paths in $\Gamma$ starting in the sources with labels in $I$ and ending in the sinks with labels in $J$, and a $k$--path is a collection of $k$ paths with no common vertices. Note that all the minors are polynomials in the weights $w(e)$ for $e\in E\Gamma$.

For a network $\Gamma$ of type $n$, we introduce a family of subnetworks $\Gamma^{(k)}$ for 
$k=1, \dots, n$ such that $\Gamma^{(n)}=\Gamma$ and $\Gamma^{(k)} \subset \Gamma$ is the subnetwork of type $k$ which contains the last $k$ sources on $L$ and the last $k$ sinks on $R$.  The remaining sources and sinks of $\Gamma$ and the edges attached to them are deleted. 

\begin{example}
 Let $n=3$. Consider a network represented on the Figure \ref{ex3net} (note that the weights equal to $1$ are omitted in pictorial presentation).

\begin{figure}[!htbp]  
\begin{center}
\includegraphics[width=6cm]{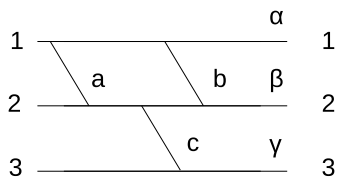}
\caption{An example of a weighted network for $n=3$}
\label{ex3net}
\end{center}
\end{figure}

 The matrix associated to it reads:
$$
M(\Gamma,w)=
\left(
\begin{array}{lll}
\alpha  & (a+b)\beta & ac\gamma \\
0 & \beta & c\gamma \\
0 & 0 & \gamma
\end{array}
\right)
$$

\end{example}

The weights of the planar network $\Gamma_s$ (see Fig. \ref{network2}) 
define a coordinate system on an open dense subset in $B_+$ \cite{FZ}. 

\begin{figure}[!htbp]  
\begin{center}
\includegraphics[width=8cm]{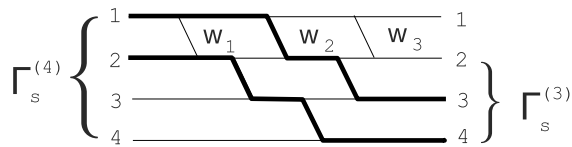}
\caption{network $\Gamma_s$ for $n=4$ and the minor $\Delta^{(4)}_2$}
\label{network2}
\end{center}
\end{figure}

\no By Lindstr\"om Lemma the minors $\Delta^{(k)}_l$ are monomials in terms of the weights. Moreover, the following proposition takes place:

\begin{prop}
The weights of the network $\Gamma_s$ are Laurent monomials in $\Delta^{(k)}_l$. 
\end{prop}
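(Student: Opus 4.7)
The strategy is to use Lindstr\"om's Lemma to express each minor $\Delta^{(k)}_l$ as a \emph{monomial} in the weights of $\Gamma_s$, and then to invert this system of monomial relations over $\mathbb{Z}$ by showing that it is triangular.

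First, I would examine the combinatorial structure of $\Gamma_s$ and verify that for each pair $(k,l)$ with $n\geq k \geq l \geq 1$ there is a unique $l$-path in the subnetwork $\Gamma_s^{(k)}$ connecting the last $l$ sources to the last $l$ sinks. This is the geometric content of the specific shape of $\Gamma_s$ (see Fig.~\ref{network2}): the ``staircase'' arrangement of elementary bidiagonal segments leaves no freedom in choosing a non-intersecting system of paths of this particular kind. Granted this uniqueness, Lindstr\"om's Lemma gives
$$
\Delta^{(k)}_l \;=\; \prod_{e \in \gamma^{(k)}_l} w(e),
$$
so each $\Delta^{(k)}_l$ is an honest monomial in the edge weights.

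Second, I would count and match: the network $\Gamma_s$ has exactly $n(n+1)/2$ non-trivially weighted edges, which is the same as the number of minors $\Delta^{(k)}_l$. This dimension match means that the $\mathbb{Z}$-linear map
$$
(\log w(e))_{e \in E\Gamma_s} \;\longmapsto\; \bigl(\log \Delta^{(k)}_l\bigr)_{n\geq k\geq l\geq 1}
$$
is encoded by a square integer matrix whose $((k,l),e)$-entry is $1$ if $e\in \gamma^{(k)}_l$ and $0$ otherwise. The proposition is equivalent to the invertibility of this matrix over $\mathbb{Z}$.

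Third, I would order the edges of $\Gamma_s$ and the pairs $(k,l)$ so that this matrix becomes triangular with $\pm 1$ entries on the diagonal. Concretely, to each edge $e$ one can associate a minimal pair $(k_e, l_e)$ such that $e\in\gamma^{(k_e)}_{l_e}$ but $e\notin\gamma^{(k')}_{l'}$ for any $(k',l')$ strictly smaller in the chosen order; this is the ``distinguished'' minor to which $e$ is attached in the standard staircase labelling of the network. With this assignment the resulting matrix is unipotent lower triangular, hence invertible over $\mathbb{Z}$. Inverting it expresses $\log w(e)$ as a $\mathbb{Z}$-linear combination of the $\log \Delta^{(k)}_l$, which is precisely the statement that $w(e)$ is a Laurent monomial in the $\Delta^{(k)}_l$.

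The main obstacle is the third step: producing an explicit ordering on edges and minors that makes the exponent matrix triangular. Once the bijective assignment $e \mapsto (k_e, l_e)$ is set up correctly (using the specific staircase geometry of $\Gamma_s$), the triangularity is essentially forced, but pinning down the right bookkeeping requires a careful enumeration of which edges of $\Gamma_s$ belong to the unique path realising each $\Delta^{(k)}_l$.
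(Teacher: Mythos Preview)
Your strategy is correct and is essentially the same as the paper's, but you have left as an ``obstacle'' exactly the step that the paper dispatches in one line. The paper does not search for an ad hoc ordering on all $n(n+1)/2$ edges; instead it uses induction on $n$ via the nesting $\Gamma_s^{(1)}\subset\Gamma_s^{(2)}\subset\cdots\subset\Gamma_s^{(n)}$. Passing from $\Gamma_s^{(n)}$ to $\Gamma_s^{(n+1)}$ adds exactly $n$ new slanted edges (the ``upper floor''), and the $n$ new minors $\Delta^{(n+1)}_1,\dots,\Delta^{(n+1)}_n$ pick them up one at a time: $\Delta^{(n+1)}_l$ is the product of the $l$-th new slanted weight and a monomial in the weights already handled by the inductive hypothesis. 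This is precisely the block-triangular structure you were trying to build by hand; grouping edges and minors by the index $k$ gives the ordering automatically.

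So there is no gap in your plan, only an unnecessary worry. Replace your third step by the induction on $k$ using the subnetwork filtration and the argument becomes short and complete.
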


\begin{proof}
One can prove this claim by induction. For $n=2$ the statement is obvious. Assume that it holds for a certain $n$. We need to show that it also holds for $n+1$. By assumption, we already know that the weights  on the subnetwork of size $n$ are Laurent monomials in $\Delta$'s, and it remains to determine $n$ weights corresponding to the slanted edges of the upper floor of the network. Starting with the leftmost slanted edge, we notice that $\Delta^{(n+1)}_1$ is a product of $w_1$ and some weights from the lower subnetwork, $\Delta^{(n+1)}_2$ is a product of $w_2$ and some weights from the lower subnetwork {\em etc.} which proves the claim (see Figure \ref{network2} for illustration of the reasoning for $n=3$).
\end{proof}

\section{Tropical Gelfand-Zeitlin map}   \label{appC}

The Gelfand-Zeitlin cone in $\R^{n(n+1)/2}$ is defined in terms of coordinates
$\lambda^{(k)}_l$ with $n \geq k \geq l \geq 1$ by the interlacing inequalities
\be \label{eq:gz}
\lambda^{(k)}_l \geq \lambda^{(k-1)}_l \geq \lambda^{(k)}_{l+1}.
\ee
These inequalities are verified by the ordered eigenvalues of a Hermitian matrix
together with its principal submatrices (see \cite{HJ}). Let
$$
\zeta^{(k)}_l = \lambda^{(k)}_1 + \dots + \lambda^{(k)}_l
$$
for $k=1, \dots, n$ and $l=1, \dots, k$ and put $\zeta^{(k)}_0=0$ for all $k$.
Then,   \eqref{eq:gz}  is equivalent to the following system of inequalities,
\be \label{eq:gz2}
\begin{array}{lll}
\zeta^{(k)}_l + \zeta^{(k-1)}_{l-1}  & \geq & \zeta^{(k)}_{l-1} + \zeta^{(k-1)}_l, \\
\zeta^{(k)}_l + \zeta^{(k-1)}_{l}  & \geq & \zeta^{(k)}_{l+1} + \zeta^{(k-1)}_{l-1}
\end{array}
\ee
for $k=2, \dots, n$ and $l=1, \dots, k-1$. These inequalities can be visualized
as shown on Figure~\ref{triang2}. The variables $\zeta^{(k)}_l$ are placed in the vertices of the graph, and inequalities correspond to rhombi of two orientations. For each rhombus in this family, the sum of variables on the short diagonal is greater or equal to the sum of variables on the long diagonal.

\begin{figure}[!htbp]
\begin{center}
\includegraphics[width=8cm]{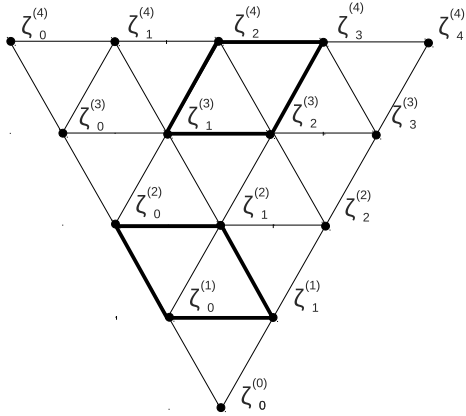}
\caption{Rhombi corresponding to Gelfand-Zeitlin inequalities  }
\label{triang2}
\end{center}
\end{figure}

Let $\Gamma$ be a planar network of type $n$ equipped with real weights
$w: E\Gamma \to \R$. Define a map $l: \R^{|E\Gamma|} \to \R^{n}$ as follows:
$$
l_i= {\rm max}_{\gamma \in P\Gamma_i} \, \sum_{e \in \gamma} w(e).
$$
For a network $\Gamma$, let $\Gamma^{(k)}$ be a subnetwok of type
$k$ obtained from $\Gamma$ by deleting the sources and sinks with
numbers $1, 2, \dots, n-k$ and the edges starting and ending in these vertices.
Define functions $l^{(k)}_i$ with $n \geq k \geq i \geq 1$ by applying
the functions $l_i$ to the weights of subnetworks $\Gamma^{(k)}$, that is 
$$
l_i^{(k)}= {\rm max}_{\gamma \in P\Gamma_i^{(k)}} \, \sum_{e \in \gamma} w(e).
$$

\no Then Theorem 2 in \cite{aps} states that the image of the combined map 
$l^{(k)}_i$ (the tropical Gelfand-Zeitlin map) is always contained 
in the Gelfand-Zeitlin cone in the form \eqref{eq:gz2}. That is, the functions
$l^{(k)}_i$ verify the inequalities
$$
\begin{array}{lll}
l^{(k)}_l + l^{(k-1)}_{l-1}  & \geq & l^{(k)}_{l-1} + l^{(k-1)}_l, \\
l^{(k)}_l + l^{(k-1)}_{l}  & \geq & l^{(k)}_{l+1} + l^{(k-1)}_{l-1} .
\end{array}
$$

Moreover, Theorem 3 in \cite{aps} states that for the planar network $\Gamma_s$ the image of the tropical Gelfand-Zeitlin map coincides with the Gelfand-Zeitlin cone. Furthermore, in this case the tropical Gelfand-Zeitlin map is a piece-wise linear map from $\R^{n(n+1)/2}$ to itself. Under this map, the space of weights splits into linearity chambers (on each chamber the tropical Gelfand-Zeitlin map is linear).It turns out that there is a unique principal linearity chamber $\C_0$ on which the Jacobian of the tropical Gelfand-Zeitlin map is non-vanishing, and it defines a bijection between $\C_0$ and $\C_{GZ}$. In particular, on $\C_0$ the maximum in the definition of $l^{(k)}_i$ is achieved on the multi-paths $\gamma^{(k)}_i$ of the type shown on Fig. \ref{paths} which are in one-to-one correspondence with the minors $\Delta^{(k)}_l$.

\begin{figure}[!htbp] 
\begin{center}
\includegraphics[width=12cm]{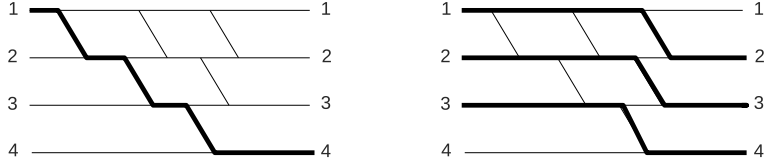}
\caption{Planar network $\Gamma^{(4)}_s$ and paths $\gamma_1^{(4)}$ and $\gamma^{(4)}_3$  }
\label{paths}
\end{center}
\end{figure}

The principal linearity chamber $\C_0$ admits the following pictorial description. 
Assign weights to connected components of the planar network $\Gamma$ according
to the following rule: for a region $\alpha$ add up weights of edges which bound $\alpha$
with sign $(+1)$ if the edge is  above or to the right of $\alpha$ and $(-1)$ if the edge
is below or to the left of $\alpha$, see Fig. \ref{prlin0}.

\begin{figure}[!htbp] 
\begin{center}
\includegraphics[width=5cm]{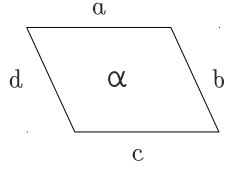}
\caption{$\omega_{\alpha}=a+b-c-d$}
\label{prlin0}
\end{center}
\end{figure}

The weighting of the planar network $\Gamma_s$ belongs to the principal chamber $\C_0$
(see Lemma 9 in \cite{aps}) if and only if the regions $\alpha^+_{k,l}$ have positive weight
and regions $\alpha^{-}_{k,l}$ have negative weight (see Fig. \ref{prlin}) for $k=2, \dots, n$ and
$l=1, \dots, k-1$.

\begin{figure}[!htbp] 
\begin{center}
\includegraphics[width=12cm]{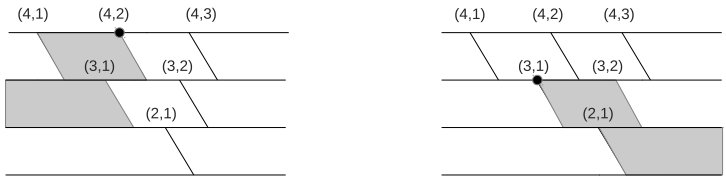}
\caption{$\alpha^{+}_{(4,2)}$ and $\alpha^{-}_{(3,1)}$}
\label{prlin}
\end{center}
\end{figure}

%\end{large}
\end{document}